\newtheorem{theorem}{Theorem}[section]
\newtheorem{proposition}[theorem]{Proposition}
\newtheorem{lemma}[theorem]{Lemma}
\newtheorem{corollary}[theorem]{Corollary}
\theoremstyle{definition}
\newtheorem{definition}[theorem]{Definition}
\newtheorem{remark}[theorem]{Remark}
\newtheorem{example}[theorem]{Example}
\newcommand{\conn}{\ensuremath{\#}}
\newcommand{\hlgy}[1]{\ensuremath{H_{*}(#1)}}
\newcommand{\rhlgy}[1]{\ensuremath{\widetilde{H}_{*}(#1)}}
\newcommand{\cohlgy}[1]{\ensuremath{H^{*}(#1)}}
\newcounter{bean}
\newcommand{\namedright}[3]{\ensuremath{#1\stackrel{#2}
 {\longrightarrow}#3}}
\newcommand{\nameddright}[5]{\ensuremath{#1\stackrel{#2}
 {\longrightarrow}#3\stackrel{#4}{\longrightarrow}#5}}
\newcommand{\namedddright}[7]{\ensuremath{#1\stackrel{#2}
 {\longrightarrow}#3\stackrel{#4}{\longrightarrow}#5
  \stackrel{#6}{\longrightarrow}#7}}
\newcommand{\larrow}{\relbar\!\!\relbar\!\!\rightarrow}
\newcommand{\llarrow}{\relbar\!\!\relbar\!\!\larrow}
\newcommand{\lllarrow}{\relbar\!\!\relbar\!\!\llarrow}
\newcommand{\lnamedright}[3]{\ensuremath{#1\stackrel{#2}
 {\larrow}#3}}
\newcommand{\lnameddright}[5]{\ensuremath{#1\stackrel{#2}
 {\larrow}#3\stackrel{#4}{\larrow}#5}}
\newcommand{\lnamedddright}[7]{\ensuremath{#1\stackrel{#2}
 {\larrow}#3\stackrel{#4}{\larrow}#5
  \stackrel{#6}{\larrow}#7}}
\newcommand{\llnameddright}[5]{\ensuremath{#1\stackrel{#2}
 {\llarrow}#3\stackrel{#4}{\llarrow}#5}}
\newcommand{\lllnamedright}[3]{\ensuremath{#1\stackrel{#2}
 {\lllarrow}#3}}
\newcommand{\lllnameddright}[5]{\ensuremath{#1\stackrel{#2}
 {\lllarrow}#3\stackrel{#4}{\lllarrow}#5}}
\newcommand{\qqed}{\hfill\Box}
\begin{document}


\title[Homotopy groups of Poincar\'{e} Duality complexes] 
   {Homotopy groups of highly connected Poincar\'{e} Duality complexes} 

\author{Piotr Beben} 
\address{International Technegroup Ltd, Swavesey, Cambridge CB24 4UQ, United Kingdom} 
\email{pdbcas@gmail.com} 
\author{Stephen Theriault}
\address{Mathematical Sciences, University of Southampton, Southampton 
   SO17 1BJ, United Kingdom}
\email{S.D.Theriault@soton.ac.uk}

\subjclass[2010]{Primary 55P35, 57N65; Secondary 55Q15}
\keywords{principal fibration, Whitehead product, loop space decomposition, 
           Poincar\'{e} Duality complex}


\begin{abstract} 
Methods are developed to relate the action of a principal fibration 
to relative Whitehead products in order to determine the homotopy type 
of certain spaces. The methods are applied to thoroughly analyze the homotopy 
type of the based loops on certain cell attachments. Key examples are 
$(n-1)$-connected Poincar\'{e} Duality complexes of dimension $2n$ or $2n+1$  
with minor cohomological conditions.  
\end{abstract}

\maketitle

\section{Introduction} 

A long-standing problem in homotopy theory is to determine the 
effect on homotopy type from attaching a cell. Starting with a space $X$ 
one considers a cofibration 
\(\nameddright{S^{m}}{f}{X}{}{X\cup CS^{m}}\) 
where, for a space $A$, $CA$ is the reduced cone on $A$. 
The aim is to determine the homotopy type of $X\cup CS^{m}$, or 
$\Omega(X\cup CS^{m})$, from information on the homotopy type 
of $X$ and the homotopy class of $f$. Rational homotopy 
theory has developed methods for dealing with certain kinds of cell 
attachments, called inert or lazy~\cite{FT, HaL, HeL}. These methods do not translate 
well to the $p$-local case, except for primes that are large relative to 
the dimension of $X$ divided by its connectivity~(see, for example,~\cite{A}), 
and therefore give limited information on the integral homotopy type. 

If cell attachments are generalized to attaching a cone, and the context is 
specialized somewhat, then information can sometimes be obtained. For 
example, suppose that there is a fibration 
\(\nameddright{F}{}{E}{}{B}\). 
Then the map 
\(\namedright{E}{}{B}\) 
extends to a map 
\(\namedright{E\cup CF}{}{B}\). 
If $G$ is the homotopy fibre of this new map, then Ganea~\cite{Ga} showed that 
$G\simeq F\ast\Omega B$, where the right side is the join of $F$ and $\Omega B$. 
Further, he showed that there is a homotopy equivalence 
$\Omega(E\cup CF)\simeq\Omega B\times\Omega (F\ast\Omega B)$. 
Note that this holds integrally. Ganea's result can be recovered as a special 
case of Mather's Cube Lemma~\cite{M}, and the Cube Lemma has been 
used in a wide variety of contexts to identify the integral homotopy types of 
certain spaces. 

We consider the following case related to the cell attachment problem and 
Ganea-type results. Suppose that there is a cofibration 
\(\nameddright{A}{f}{Y}{}{Y\cup CA}\); 
there is a map 
\(\namedright{Y}{}{Z}\) 
which induces a principal fibration 
\(\nameddright{\Omega Z}{}{E}{p}{Y}\); 
and the map 
\(\namedright{Y}{}{Z}\) 
extends to a map  
\(\namedright{Y\cup CA}{}{Z}\), 
inducing a principal fibration 
\(\nameddright{\Omega Z}{}{E'}{p'}{Y\cup CA}\). 
In Sections~\ref{sec:principal} and~\ref{sec:relcube} 
we develop new techniques that relate the action of a principal fibration 
to relative Whitehead products in order to identify the homotopy type 
of $E'$ and the homotopy class of $p'$ in terms of the homotopy type 
of $E$ and the homotopy classes of~$p$ and $f$. This requires certain 
hypotheses on the spaces and maps involved, but these are fulfilled in 
a wide variety of contexts. 

The new methods are powerful and should have numerous applications. 
We use them to prove general results about certain cell attachments in 
Theorems~\ref{PDthm} and~\ref{PDthm2}. Philosophically, these cell attachments  
are integral versions of inert maps in rational homotopy theory. Key examples 
are $(n-1)$-connected $2n$-dimensional Poincar\'{e} Duality complexes $M$ having 
$H^{n}(M;\mathbb{Z}))\cong\mathbb{Z}^{d}$ for $d\geq 2$ and $n\notin\{4,8\}$, 
and $(n-1)$-connected 
$(2n+1)$-dimensional Poincar\'{e} Duality complexes $M$ having 
$H^{n+1}(M;\mathbb{Z}))\cong\mathbb{Z}^{d}$ for $d\geq 1$. 
In Examples~\ref{decompex1} and~\ref{decompex2} 
we give an explicit homotopy decomposition of $\Omega M$ and identify 
the homotopy classes of the maps from the factors into $\Omega M$. 
In Examples~\ref{decompex11} and~\ref{decompex22} we go further: 
if $\overline{M}$ is $M$ with a point removed and 
\(\namedright{\overline{M}}{j}{M}\) 
is the inclusion, then we explicitly identify the homotopy fibre of $j$, the 
homotopy class of the map from the fibre into $\overline{M}$, and show that 
$\Omega j$ has a right homotopy inverse. Collectively, these results give a 
thorough picture of the homotopy theory associated to~$M$. They subsume 
most of the results in~\cite{B,BB,BT,BW} and often go much further. For example, 
the statements about the map $j$ were known only in the case when $M$ 
is a simply-connected $4$-manifold. Finally, in Section~\ref{sec:collar} we 
give an entirely new example that gives conditions for when the map 
\(\namedright{M\conn N}{}{M\vee N}\)  
that collapses the collar in a connected sum of two Poincar\'{e} duality complexes 
has a right homotopy inverse after looping. 

The authors would like to thank the referee for a careful reading of the paper 
and numerous valuable comments.

\section{Principal fibrations and relative Whitehead products} 
\label{sec:principal} 

In this section we define relative Whitehead products and relate 
them to the action induced by a principal fibration. This will be 
an important technical tool used later in the paper. The material 
in this section is a development of that in~\cite[\S 3]{Gr}, which in 
turn was a development on~\cite[\S 6.10]{N}. We give a full account in order 
to have to hand all the material needed for later. 

In what follows, it should be pointed out that by a fibration we 
mean a strict fibration, not a fibration up to homotopy. 
All spaces are assumed to be path-connected and pointed, with the basepoint 
generically denoted by~$\ast$, and to have the homotopy type of a finite type $CW$-complex. 
For a space $X$, let $X^{I}$ be the space 
of (not necessarily pointed) continuous maps from the unit interval $I$ 
to~$X$. Let $PX=\{\omega\in X^{I}\mid \omega(0)=\ast\}$ be the 
\emph{path space} of $X$. Let 
\(ev_{1}\colon\namedright{PX}{}{X}\) 
be the evaluation map, defined by $ev_{1}(\omega)=\omega(1)$. 
The \emph{loop space} $\Omega X$ is the subspace of $PX$ 
with the property that $ev_{1}(\omega)=\ast$. It is well known that 
there is a fibration 
\(\nameddright{\Omega X}{}{PX}{ev_{1}}{X}\).  

Let 
\(\nameddright{\Omega Z}{i}{E}{p}{B}\) 
be a principal fibration induced by a map 
\(\varphi\colon\namedright{B}{}{Z}\). 
Precisely, $E$ and $p$ are defined by the pullback 
\[\diagram 
       E\rto\dto^{p} & PZ\dto^{ev_{1}} \\ 
       B\rto^-{\varphi} & Z.  
  \enddiagram\]
So $E=\{(b,\omega)\in B\times PZ\mid \omega(1)=b\}$  
and $p$ is the projection, $p(b,\omega)=b$. 

This principal fibration has an action of the fibre on the total space, 
\[a\colon\namedright{\Omega Z\times E}{}{E}\] 
defined by $a(\gamma,(b,\omega))=(b,\omega\circ\gamma)$. 
One useful property this satisfies is the following. The definition 
of $p$ as a projection implies that there is a strictly commutative 
diagram 
\begin{equation} 
  \label{actionproj} 
  \diagram 
       \Omega Z\times E\rto^-{a}\dto^{\pi_{2}} & E\dto^{p} \\ 
       E\rto^-{p} & B.  
  \enddiagram 
\end{equation} 

Consider the map 
\(\namedright{B\vee E}{1\vee p}{B}\). 
Since the composite 
\(\nameddright{E}{p}{B}{\varphi}{Z}\) 
is null homotopic, there is a homotopy commutative diagram 
\[\diagram 
       B\vee E\rto^-{1\vee p}\dto^{p_{1}} & B\dto^{\varphi} \\ 
       B\rto^-{\varphi} & Z 
  \enddiagram\] 
where $p_{1}$ is the pinch map onto the first wedge summand. 
We wish to have a model for the homotopy fibre of $p_{1}$. 

In general, if 
\(f\colon\namedright{X}{}{Y}\) 
is a continuous map with $Y$ connected, there is a standard 
way of turning $f$ into a fibration. Define the space $\widetilde{X}$ 
as the pullback 
\[\diagram 
        \widetilde{X}\rto\dto & Y^{I}\dto^{ev_{1}} \\ 
        X\rto^-{f} & Y 
  \enddiagram\] 
where $ev_{1}(\omega)=\omega(1)$. Then 
$\widetilde{X}=\{(x,\omega)\in X\times Y^{I}\mid f(x)=\omega(1)\}$. 
Let $\widetilde{f}$ be the composite 
\[\widetilde{f}\colon\nameddright{\widetilde{X}}{}{Y^{I}}{ev_{0}}{Y}\] 
where $ev_{0}(\omega)=\omega(0)$. Then $\widetilde{f}$ is a fibration. 
Define 
\(\iota\colon\namedright{X}{}{\widetilde{X}}\) 
by $\iota(x)=(x,c_{f(x)})$ where $c_{f(x)}$ is the constant path 
at $f(x)$. Then there is a strictly commutative diagram 
\[\diagram 
         X\rto^-{\iota}\dto_{f} & \widetilde{X}\dlto^-{\widetilde{f}} \\ 
         Y & 
  \enddiagram\] 
in which $\widetilde{f}$ is a fibration and $\iota$ is a homotopy 
equivalence. The \emph{homotopy fibre} of $f$ is the actual 
fibre of $\widetilde{f}$, which is the space 
$F_{f}=\{(x,\omega)\in\widetilde{X}\mid\omega(0)=\ast\}$, and this 
maps to $X$ by the composite 
\(\nameddright{F_{f}}{}{\widetilde{X}}{}{X}\). 
Note that if this construction is applied to the inclusion of the basepoint 
\(\namedright{\ast}{}{Y}\) 
then $\widetilde{\ast}$ is the ``reverse" path space on $Y$, 
$\widetilde{\ast}=\{\omega\in Y^{I}\mid \omega(1)=\ast\}$, 
and $F_{f}=\Omega Y$. If the construction is applied to the identity map 
\(\namedright{Y}{}{Y}\) 
then $F_{f}$ is precisely the path space $PY$. 

Apply this construction to the pinch map 
\(\namedright{B\vee E}{p_{1}}{B}\).  
The restriction of $p_{1}$ to $B$ is the identity map on $B$. 
So the part of the fibre $F_{p_{1}}$ corresponding to $B\subseteq B\vee E$ 
is $PB$ and this maps to $B$ by sending $\gamma\in PB$ to $\gamma(1)\in B$. 
The restriction of $p_{1}$ to $E$ is the constant map to the basepoint, 
so the part of the fibre $F_{p_{1}}$ corresponding to $E\subseteq B\vee E$ is 
$\Omega B\times E$, and this maps to $E$ by projecting $(\gamma,e)$ 
to $e$. The two parts of the fibre $F_{p_{1}}$ that correspond to the basepoint 
$\ast\subseteq B\vee E$ match at~$\Omega B$. Thus 
$F_{p_{1}}=PB\cup_{\Omega B}\Omega B\times E$ and the map 
\(\namedright{PB\cup_{\Omega B}\Omega B\times E}{}{B\vee E}\) 
is given by sending $\gamma\in PB$ to $\gamma(1)\in B$ 
and projecting $(\gamma,e)\in\Omega B\times E$ to $e\in E$. The 
initial model for the homotopy fibre of~$p_{1}$ is therefore 
$PB\cup_{\Omega B}\Omega B\times E$. 

It is convenient to express this homotopy fibre in terms 
of the cone on $\Omega B$, up to homotopy equivalence. For a 
space $Y$, the \emph{reduced cone} on $Y$ is defined by 
$CY=(Y\times I)/\sim$ where $(y,0)\sim\ast$ and $(\ast,t)\sim\ast$. 
Observe that $Y$ includes into $CY$ by sending $y$ to $(y,1)$. 
Notice that $CY$ is a lower cone, which we use instead of the more 
usual upper cone, as it makes several subsequent formulas easier 
to follow. It is well known that the map of pairs 
\[\xi\colon\namedright{(C\Omega B,\Omega B)}{}{(PB,\Omega B)}\] 
defined by $\xi(\gamma,t)(s)=\gamma(st)$ is a homotopy equivalence. 

Collecting all the information above, and noting that all the constructions 
involved are natural, we obtain the following. 

\begin{lemma} 
   \label{fibmodel} 
   A model for the homotopy fibre of the pinch map 
   \(\namedright{B\vee E}{p_{1}}{B}\) 
   is $C\Omega B\cup_{\Omega B} \Omega B\times E$, and with respect to 
   this model the map 
   \[\psi\colon\namedright{C\Omega B\cup_{\Omega B}\Omega B\times E}{}{B\vee E}\]  
   from the fibre is given by sending $(\gamma,t)\in C\Omega B$ to 
   $\gamma(t)\in B$ and projecting $(\gamma,e)\in\Omega B\times E$ 
   to~$e\in E$. Further, all of this is natural for maps 
   \(\namedright{B\vee E}{}{B'\vee E'}\).~$\qqed$ 
\end{lemma}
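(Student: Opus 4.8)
The plan is to assemble the model piece by piece, exactly as the preceding discussion sets it up, and then to upgrade the initial model to the cone model via the homotopy equivalence $\xi$. First I would recall the standard ``mapping path space'' construction described above applied to the pinch map $p_1\colon B\vee E\to B$: this produces a fibration $\widetilde{f}\colon \widetilde{X}\to B$ together with a homotopy equivalence $\iota\colon B\vee E\to\widetilde{X}$ over $B$, so that the actual fibre $F_{p_1}$ of $\widetilde{f}$ is, by definition, the homotopy fibre of $p_1$, with its canonical map $F_{p_1}\to\widetilde{X}\xrightarrow{\iota^{-1}}B\vee E$. The point is that this construction is natural in the map $p_1$, hence natural for commuting squares, which will give the last sentence of the lemma for free once the rest is in place.

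Next I would compute $F_{p_1}$ explicitly by decomposing the domain $B\vee E$ into its two wedge summands and tracking what the fibre looks like over each. Since $p_1|_B=\mathrm{id}_B$, the portion of $F_{p_1}$ sitting over $B\subseteq B\vee E$ is precisely the fibre of the standard fibration replacing the identity, namely $PB$, mapping to $B$ by $\gamma\mapsto\gamma(1)$; since $p_1|_E$ is the constant map at $\ast$, the portion over $E\subseteq B\vee E$ is $\Omega B\times E$ (the ``reverse loops'' at $\ast$ times $E$), mapping to $E$ by projection. These two pieces are glued along the subspace lying over the wedge point $\ast\in B\vee E$, which in both descriptions is $\Omega B$: in $PB$ it is the subspace of paths $\gamma$ with $\gamma(1)=\ast$, and in $\Omega B\times E$ it is $\Omega B\times\{\ast\}$. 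Hence $F_{p_1}=PB\cup_{\Omega B}(\Omega B\times E)$ with the asserted map $\psi_0$ to $B\vee E$. I should be a little careful here that the gluing in the pushout is along the honest inclusions $\Omega B\hookrightarrow PB$ and $\Omega B=\Omega B\times\{\ast\}\hookrightarrow\Omega B\times E$, so that the pushout is well defined and the map $\psi_0$ is well defined on it (the two descriptions agree on the overlap because both send a loop $\gamma$ to the basepoint of $B\vee E$).

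Finally I would replace $PB$ by $C\Omega B$. The map of pairs $\xi\colon (C\Omega B,\Omega B)\to(PB,\Omega B)$, $\xi(\gamma,t)(s)=\gamma(st)$, is a homotopy equivalence of pairs restricting to the identity on $\Omega B$ (since $\xi(\gamma,1)=\gamma$), so gluing $\xi$ with the identity on $\Omega B\times E$ along the common $\Omega B$ yields a homotopy equivalence $C\Omega B\cup_{\Omega B}(\Omega B\times E)\xrightarrow{\simeq}PB\cup_{\Omega B}(\Omega B\times E)=F_{p_1}$; here I would invoke the standard fact that a map of pushout diagrams which is a homotopy equivalence on each corner (and here is even a cofibration-respecting pushout, with $\Omega B\hookrightarrow C\Omega B$ and $\Omega B\hookrightarrow PB$ cofibrations) induces a homotopy equivalence on pushouts. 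Precomposing $\psi_0$ with this equivalence gives $\psi$ with the stated formula: $(\gamma,t)\in C\Omega B$ maps to $\xi(\gamma,t)(1)=\gamma(t)\in B$, and $(\gamma,e)\in\Omega B\times E$ projects to $e\in E$. Naturality is then immediate: every ingredient — the mapping path space construction, the wedge decomposition, the identification of the two pieces as $PB$ and $\Omega B\times E$, and the equivalence $\xi$ — is natural in $B$ and $E$, so the whole model transforms functorially under maps $B\vee E\to B'\vee E'$ compatible with the pinch maps. The main obstacle, such as it is, is purely bookkeeping: checking that the pushout descriptions genuinely match along $\Omega B$ and that $\xi$ restricts to the identity there, so that the glued map is a well-defined homotopy equivalence; there is no deep content, only care with the path-space identifications.
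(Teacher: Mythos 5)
Your argument follows the paper's own line of reasoning almost exactly: you compute the strict fibre of the mapping path space replacement for $p_1$ piece by piece over the two wedge summands, glue along $\Omega B$ over the basepoint, and then upgrade $PB$ to $C\Omega B$ via the pair equivalence $\xi$, with naturality coming from the naturality of each construction. One small slip worth fixing: the canonical map $F_{p_1}\to B\vee E$ is the projection $\widetilde{X}\to B\vee E$ coming from the pullback (a strict retraction onto $\iota$), not ``$\iota^{-1}$'', which as written is not a map; this matters later in the paper where this map is used in strictly commutative diagrams.
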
 
       
From this description we can immediately determine the following composition. 

\begin{corollary} 
   \label{Gammalower} 
   The composite 
   \(\nameddright{C\Omega B\cup_{\Omega B}\Omega B\times E}{\psi}{B\vee E} 
          {1\vee p}{B}\) 
   is given by sending $(\gamma,t)\in C\Omega B$ to $\gamma(t)\in B$ 
   and by sending $(\gamma,e)\in\Omega B\times E$ to $p(e)$.~$\qqed$ 
\end{corollary}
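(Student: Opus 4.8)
The plan is to obtain this directly from the explicit description of $\psi$ recorded in Lemma~\ref{fibmodel}, together with the definition of the map $1\vee p$. First I would note that $1\vee p\colon\namedright{B\vee E}{}{B}$ restricts to the identity on the wedge summand $B$ and to $p$ on the wedge summand $E$; this is just the definition of the wedge of two maps. Then I would trace the two pieces of the pushout $C\Omega B\cup_{\Omega B}\Omega B\times E$ through the composite separately. On the piece $C\Omega B$, Lemma~\ref{fibmodel} gives $\psi(\gamma,t)=\gamma(t)$, which lies in the wedge summand $B$, so post-composing with $1\vee p$ leaves it unchanged and the composite sends $(\gamma,t)$ to $\gamma(t)$. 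On the piece $\Omega B\times E$, Lemma~\ref{fibmodel} gives $\psi(\gamma,e)=e$, which lies in the wedge summand $E$, so post-composing with $1\vee p$ yields $p(e)$, and the composite sends $(\gamma,e)$ to $p(e)$.

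The only remaining point is to check that these two formulas agree on the subspace $\Omega B$ along which the pushout is formed, so that they do glue to a well-defined map on $C\Omega B\cup_{\Omega B}\Omega B\times E$; but this is immediate, since on $C\Omega B$ the subspace $\Omega B$ sits at $t=1$ and is identified with $\Omega B\times\{\ast\}\subseteq\Omega B\times E$, and the first formula sends $(\gamma,1)$ to $\gamma(1)=\ast$ (as $\gamma$ is a loop) while the second sends $(\gamma,\ast)$ to $p(\ast)=\ast$. Thus the composite is exactly as claimed. I do not expect any genuine obstacle here: the statement is a formal consequence of Lemma~\ref{fibmodel} and unwinding the definition of $1\vee p$, with the sole verification being this routine compatibility on the gluing locus. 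If desired, one could also remark that, as with the fibre model itself, this identification is natural for maps $\namedright{B\vee E}{}{B'\vee E'}$ covering maps $\namedright{B}{}{B'}$.
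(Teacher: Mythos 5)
Your argument is correct and is essentially the paper's own: the corollary is stated as an immediate consequence of Lemma~\ref{fibmodel}, obtained by composing the explicit description of $\psi$ with the definition of $1\vee p$ on each piece of the pushout, exactly as you do. The gluing check you include is routine but harmless to spell out.
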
 

Next, we wish to produce an alternative description of the composition 
in Corollary~\ref{Gammalower} which depends on the action of the 
principal fibration 
\(\nameddright{\Omega Z}{i}{E}{p}{B}\). 
The composite 
\(\nameddright{\Omega B}{\Omega\varphi}{\Omega Z}{i}{E}\)  
is null homotopic since it is the composition of two consecutive 
maps in a homotopy fibration sequence. An explicit null homotopy 
is as follows. Recall that 
$E=\{(b,\omega)\in B\times PZ\mid \omega(1)=\varphi(b)\}$. 
Define 
\[H\colon\namedright{\Omega B\times I}{}{E}\] 
by $H(\gamma,t)=(\gamma(t),(\Omega\varphi)(\gamma_{t}))$ 
where $\gamma_{t}(s)=\gamma(st)$. Notice that $\gamma_{0}$ 
is the constant map and $\gamma_{1}=\gamma$. Observe that:  
\begin{itemize} 
   \item[(i)] $(\Omega\varphi)(\gamma_{t})(1)=\varphi(\gamma_{t}(1))=\varphi(\gamma(t))$; 
   \item[(ii)] $H(\gamma,0)=(\gamma(0),(\Omega\varphi)(\gamma_{0})=(\ast,\ast)$; 
   \item[(iii)] $H(\gamma,1)=(\gamma(1),(\Omega\varphi)(\gamma_{1})= 
             (\ast,(\Omega\varphi)(\gamma))=(i\circ\Omega\varphi)(\gamma)$. 
\end{itemize} 
Item (i) implies that $H(\gamma,t)\in E$ so $H$ is well-defined, item (ii) 
implies that $H_{0}$ is the constant map and item (iii) implies that 
$H_{1}=i\circ\Omega\varphi$. 

Recalling that $C\Omega B$ is a lower cone with $\Omega B$ including in 
by sending $b$ to $(b,1)$, the homotopy~$H$ can be used to define a map 
\(K\colon\namedright{C\Omega B}{}{E}\) 
by $K(\gamma,t)=H(\gamma,t)$. Then there is a strictly commutative diagram 
\[\diagram 
       \Omega B\rto^-{\Omega\varphi}\dto & \Omega Z\rto^-{i} & E \\ 
       C\Omega B\urrto_-{K}. 
  \enddiagram\] 
Consider the composite 
\(\nameddright{C\Omega B}{K}{E}{p}{B}\). 
As $E=\{(b,\omega)\in B\times PZ\mid \omega(1)=\varphi(b)\}$ and 
$p$ is the projection $p(b,\omega)=b$, we obtain  
\begin{equation} 
  \label{pev} 
  (p\circ K)(\gamma,t)=(p\circ H)(\gamma,t)= 
       p((\gamma(t),(\Omega\varphi)(\gamma_{t})))=\gamma(t). 
\end{equation}  
That is, $p\circ K$ is the evaluation map. 

We relate $K$ to the action 
\(\namedright{\Omega Z\times E}{a}{E}\) 
for the principal fibration 
\(\nameddright{\Omega Z}{i}{E}{p}{B}\). 
Let~$\theta$ be the composite 
\[\theta\colon\lnameddright{\Omega B\times E}{\Omega\varphi\times 1} 
     {\Omega Z\times E}{a}{E}.\] 
Since the restriction of $a$ to $\Omega Z$ is $i$, the restriction of $\theta$ 
to $\Omega B$ is $i\circ\Omega\varphi$. On the other hand, by definition 
of $K$ in terms of the homotopy $H$ and item~(iii) above, the restriction of $K$ to 
$\Omega B\subseteq C\Omega B$ is $i\circ\Omega\varphi$. Therefore 
there is a pushout map 
\begin{equation} 
   \label{Gammadgrm} 
   \xymatrix{  
         \Omega B\ar[r]\ar[d] & \Omega B\times E\ar[d]\ar@/^/[ddrr]^{\theta} & & \\ 
         C\Omega B\ar[r]\ar@/_/[drrr]_{K} & C\Omega B\cup_{\Omega B}\Omega B\times E 
             \ar@{-->}[drr]^{\Gamma} & & \\
         & & & E.} 
\end{equation} 
that defines $\Gamma$. 

\begin{lemma} 
   \label{Gammaupper} 
   The composite 
   \(\nameddright{C\Omega B\cup_{\Omega B}\Omega B\times E}{\Gamma} 
       {E}{p}{B}\) 
   is given by sending $(\gamma,t)\in C\Omega B$ to $\gamma(t)\in B$ 
   and by sending $(\gamma,e)\in\Omega B\times E$ to $p(e)$.  
\end{lemma}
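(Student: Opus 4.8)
The plan is to evaluate $p\circ\Gamma$ separately on the two pieces of the pushout $C\Omega B\cup_{\Omega B}\Omega B\times E$ appearing in~\eqref{Gammadgrm}, and then to recognize the resulting formulas as exactly those in the statement. Since the formula obtained coincides with the description of $(1\vee p)\circ\psi$ given in Corollary~\ref{Gammalower}, this will provide the promised reinterpretation of that composite in terms of the action $a$ of the principal fibration. The argument is essentially a bookkeeping exercise, so the only thing to be careful about is reading off the restrictions of $\Gamma$ from the pushout correctly and applying the naturality square~\eqref{actionproj} at the right moment.

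First I would restrict to $C\Omega B\subseteq C\Omega B\cup_{\Omega B}\Omega B\times E$. By the defining diagram~\eqref{Gammadgrm}, the restriction of $\Gamma$ to $C\Omega B$ is the map $K$, so the restriction of $p\circ\Gamma$ to $C\Omega B$ is $p\circ K$. But $p\circ K$ was already computed in~\eqref{pev} to be the evaluation map sending $(\gamma,t)$ to $\gamma(t)\in B$. This settles the first half of the claim directly.

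Next I would restrict to $\Omega B\times E$. Again by~\eqref{Gammadgrm}, the restriction of $\Gamma$ to $\Omega B\times E$ is $\theta=a\circ(\Omega\varphi\times 1)$, so the restriction of $p\circ\Gamma$ to $\Omega B\times E$ is $p\circ a\circ(\Omega\varphi\times 1)$. Here the key input is the strictly commutative square~\eqref{actionproj}, which gives $p\circ a=p\circ\pi_{2}$ where $\pi_{2}\colon\namedright{\Omega Z\times E}{}{E}$ is the projection. Hence this restriction equals $p\circ\pi_{2}\circ(\Omega\varphi\times 1)$, and since $\pi_{2}\circ(\Omega\varphi\times 1)$ is simply the projection $\namedright{\Omega B\times E}{}{E}$, $(\gamma,e)\mapsto e$, we conclude that $p\circ\Gamma$ sends $(\gamma,e)$ to $p(e)$, which is the second half of the claim. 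Finally, because $\Gamma$ is a genuine map out of the pushout, the two formulas automatically agree on the common copy of $\Omega B$, so no separate compatibility check is needed. I do not anticipate any real obstacle; the subtle point, if any, is just making sure that the pushout construction in~\eqref{Gammadgrm} really does identify $\Gamma|_{C\Omega B}=K$ and $\Gamma|_{\Omega B\times E}=\theta$ on the nose, which it does by construction.
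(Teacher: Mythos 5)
Your proof is correct and follows exactly the same route as the paper's: restrict $\Gamma$ to each piece of the pushout in~(\ref{Gammadgrm}), use~(\ref{pev}) to identify $p\circ K$ as the evaluation map, and use the strictly commutative square~(\ref{actionproj}) to reduce $p\circ\theta$ to $(\gamma,e)\mapsto p(e)$. There is nothing to add.
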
 

\begin{proof} 
The restriction of $\Gamma$ to $C\Omega B$ is $K$, so the restriction of 
$p\circ\Gamma$ to $C\Omega B$ is $p\circ K$, which by~(\ref{pev}) is the evaluation map 
sending $(\gamma,t)$ to $\gamma(t)$. The restriction of $\Gamma$ to 
$\Omega B\times E$ is $\theta$, so the restriction of $p\circ\Gamma$ 
to $\Omega B\times E$ is $p\circ\theta=p\circ a\circ(\Omega\varphi\times 1)$. 
Using~(\ref{actionproj}), there is a strictly commutative diagram 
\[\diagram 
       \Omega B\times E\rto^-{\Omega\varphi\times 1}\dto^{\pi_{2}} 
            & \Omega Z\times E\rto^-{a}\dto^{\pi_{2}} & E\dto^{p} \\ 
       E\rdouble & E\rto^-{p} & B  
  \enddiagram\] 
which shows that $p\circ\theta$ sends $(\gamma,e)\in\Omega B\times E$ 
to $p(e)$. 
\end{proof} 

Corollary~\ref{Gammalower} and Lemma~\ref{Gammaupper} combine to give the following. 

\begin{proposition} 
   \label{keysquare} 
   There is a strictly commutative diagram 
   \[\diagram 
         C\Omega B\cup_{\Omega B}\Omega B\times E\rto^-{\Gamma}\dto^{\psi}  
             & E\dto^{p} \\ 
         B\vee E\rto^-{1\vee p} & B. 
     \enddiagram\] 
   $\qqed$ 
\end{proposition}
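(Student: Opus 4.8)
The plan is to reduce the proposition to the two explicit point-set descriptions already in hand and to check that they literally coincide. The space $C\Omega B\cup_{\Omega B}\Omega B\times E$ is a specific quotient glued along $\Omega B$, so any map out of it is determined by its restrictions to the two pieces $C\Omega B$ and $\Omega B\times E$, provided those restrictions agree on $\Omega B$. Thus it suffices to compare $p\circ\Gamma$ with $(1\vee p)\circ\psi$ on each of these two pieces.

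First I would invoke Corollary~\ref{Gammalower}, which already records that $(1\vee p)\circ\psi$ sends $(\gamma,t)\in C\Omega B$ to $\gamma(t)\in B$ and sends $(\gamma,e)\in\Omega B\times E$ to $p(e)$; this description is on the nose, coming directly from the point-set formula for $\psi$ in Lemma~\ref{fibmodel} together with the fact that $1\vee p$ is the identity on $B$ and is $p$ on $E$. Next I would invoke Lemma~\ref{Gammaupper}, whose proof establishes --- again strictly, using equation~(\ref{pev}) for the restriction to $C\Omega B$ and the strictly commutative square~(\ref{actionproj}) for the restriction to $\Omega B\times E$ --- that $p\circ\Gamma$ sends $(\gamma,t)$ to $\gamma(t)$ and $(\gamma,e)$ to $p(e)$. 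Comparing, $p\circ\Gamma$ and $(1\vee p)\circ\psi$ are given by identical formulas on $C\Omega B$ and on $\Omega B\times E$, so they are equal as maps out of the quotient, which is exactly the asserted strict commutativity.

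There is no real obstacle here beyond bookkeeping: the substantive work has been front-loaded into the constructions of $\psi$ and $\Gamma$ and into the verification that $K$ and $\theta$ both restrict to $i\circ\Omega\varphi$ on $\Omega B$, which is what makes $\Gamma$ well defined on the pushout~(\ref{Gammadgrm}) in the first place. The only point requiring care is to confirm that every identification used is strict rather than merely up to homotopy --- in particular that $p\circ K$ is exactly the evaluation map and that the action satisfies~(\ref{actionproj}) on the nose --- and both of these are already available. Hence the diagram commutes strictly.
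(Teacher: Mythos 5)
Your argument is exactly the paper's: Corollary~\ref{Gammalower} gives the point-set formula for $(1\vee p)\circ\psi$, Lemma~\ref{Gammaupper} gives the identical formula for $p\circ\Gamma$, and since both are determined by their restrictions to $C\Omega B$ and $\Omega B\times E$ the two composites agree strictly. This matches the paper's proof, which simply records that the two cited results combine to yield the diagram.
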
 

Proposition~\ref{keysquare} is a key technical result. It relates the homotopy 
fibre of the pinch map from $B\vee E$ to $B$ to the action induced 
by the principal fibration $p$. Its importance will be seen in how 
it is used to relate certain Whitehead products on $B$ to the 
principal action. 

Let $G$ be an $H$-group, which is a homotopy associative $H$-space 
with a homotopy inverse. For example, any loop space is an $H$-group. 
By~\cite{S}, the multiplication $m$ on $G$ is homotopic to one 
in which the basepoint is a strict unit, that is, one for which $m(x,\ast)=x=m(\ast,x)$ 
for any $x\in G$ (this is true for any $H$-space, not just $H$-groups). From here on it will 
be assumed that the multiplication on $G$ has a strict unit. If there are maps 
\(f\colon\namedright{A}{}{G}\) 
and 
\(g\colon\namedright{B}{}{G}\) 
the commutator 
\(c(f,g)\colon\namedright{A\times B}{}{G}\) 
of $f$ and $g$ is defined pointwise by 
$c(f,g)(a,b)=f(a)g(b)f(a)^{-1}g(b)^{-1}$. Since the multiplication 
on $G$ has a strict unit, the restriction of $c(f,g)$ to $G\vee G$ is null 
homotopic, so $c(f,g)$ extends to a map 
\[\langle f,g\rangle\colon\namedright{A\wedge B}{}{G}\] 
called the \emph{Samelson product} of $f$ and $g$. As the connecting map 
for the homotopy cofibration 
\(\nameddright{A\vee B}{}{A\times B}{}{A\wedge B}\) 
is null homotopic (since $\Sigma A\vee\Sigma B$ retracts off $\Sigma(A\times B)$), 
the homotopy class of $\langle f,g\rangle$ is determined uniquely by that of $c(f,g)$. 

In general, if $X$ and $Y$ are path-connected spaces then there are homeomorphisms 
$\Sigma(X\wedge Y)\cong\Sigma X\wedge Y\cong X\wedge\Sigma Y$. In what follows 
the version used will depend only on aesthetics. Carrying on from the previous paragraph, 
if $G=\Omega Z$ then $f,g$ have adjoints 
\(f'\colon\namedright{\Sigma A}{}{Z}\) 
and 
\(g'\colon\namedright{\Sigma B}{}{Z}\) 
respectively. The \emph{Whitehead product} of $f'$ and $g'$ is 
the map 
\([f',g']\colon\namedright{\Sigma A\wedge B}{}{Z}\) 
obtained by taking the adjoint of the Samelson product $\langle f,g\rangle$. 

Let $X$ and $Y$ be path-connected spaces. Let 
\(i_{1}\colon\namedright{\Sigma X}{}{\Sigma X\vee\Sigma Y}\) 
and 
\(i_{2}\colon\namedright{\Sigma Y}{}{\Sigma X\vee\Sigma Y}\) 
be the inclusion of the left and right wedge summands respectively. Let 
\(W\colon\namedright{\Sigma X\wedge Y}{}{\Sigma X\vee\Sigma Y}\) 
be the Whitehead product $W=[i_{1},i_{2}]$. It is well known that there 
is a homotopy cofibration 
\(\nameddright{\Sigma X\wedge Y}{W}{\Sigma X\vee\Sigma Y}{} 
     {\Sigma X\times\Sigma Y}\) 
where the right map is the inclusion of the wedge into the product. 
Observe that the pinch map 
\(p_{1}\colon\namedright{\Sigma X\vee\Sigma Y}{}{\Sigma X}\) 
factors as the composite 
\(\nameddright{\Sigma X\vee\Sigma Y}{}{\Sigma X\times\Sigma Y}{\pi_{1}} 
     {\Sigma X}\) 
where $\pi_{1}$ is the projection onto the first factor. Thus $p_{1}\circ W$ 
is null homotopic, implying that $W$ lifts to the homotopy fibre of $p_{1}$. 
Using the model for the homotopy fibre of $p_{1}$ already established, we 
obtain a lift 
\begin{equation} 
  \label{lambdalift} 
  \diagram 
         & C\Omega\Sigma X\cup_{\Omega\Sigma X}\Omega\Sigma X\times\Sigma Y\dto^{\psi} \\ 
         \Sigma X\wedge Y\rto^-{W}\urto^-{\lambda} & \Sigma X\vee\Sigma Y  
  \enddiagram 
\end{equation} 
for some map $\lambda$. In the homotopy fibration sequence 
\[\nameddright{\Omega(\Sigma X\vee\Sigma Y)}{\Omega p_{1}}{\Omega\Sigma X} 
      {\partial}{C\Omega\Sigma X\cup_{\Omega\Sigma X}\Omega\Sigma X\times\Sigma Y},\]  
where $\partial$ is the fibration connecting map, the map $\Omega p_{1}$ 
has a right homotopy inverse, implying that~$\partial$ is null homotopic. 
Thus the homotopy class of the lift $\lambda$ is uniquely determined by 
the homotopy class of $W$. The naturality of $W$ therefore implies the 
naturality of the homotopy class of the lift $\lambda$.  

We develop this in the context of the wedge $B\vee E$ used 
previously. Suppose that there are maps 
\(f\colon\namedright{\Sigma X}{}{B}\) 
and 
\(g\colon\namedright{\Sigma Y}{}{E}\). 
Consider the diagram 
\begin{equation} 
  \label{reldef} 
  \diagram 
        & C\Omega\Sigma X\cup_{\Omega\Sigma X}\Omega\Sigma X\times\Sigma Y 
                   \rrto^-{\Theta}\dto^{\psi} 
           & & C\Omega B\cup_{\Omega B}\Omega B\times E\rto^-{\Gamma}\dto^{\psi}  
           & E\dto^{p} \\ 
        \Sigma X\wedge Y\rto^-{W}\urto^-{\lambda} 
           & \Sigma X\vee\Sigma Y\rrto^-{f\vee g} 
           & & B\vee E\rto^-{1\vee p} & B  
  \enddiagram 
\end{equation} 
where $\Theta=C\Omega f\cup_{\Omega f}(\Omega f\times g)$. 
The left triangle commutes by~(\ref{lambdalift}). The middle square 
strictly commutes by the naturality of Lemma~\ref{fibmodel}. The right square 
strictly commutes by Proposition~\ref{keysquare}. Observe that the 
naturality of the Whitehead product $W=[i_{1},i_{2}]$ implies that 
the composite along the bottom row of~(\ref{reldef}) is homotopic 
to the Whitehead product $[f,p\circ g]$. Thus $\Gamma\circ\Theta\circ\lambda$ 
is a lift of $[f,p\circ g]$ through $p$. 

\begin{definition} 
The composite $[f,g]_{r}=\Gamma\circ\Theta\circ\lambda$ is the 
\emph{relative Whitehead product} of the maps $f$ and $g$. 
\end{definition} 

\begin{remark} 
The relative Whitehead product generalizes the usual Whitehead product. If the 
underlying homotopy fibration 
\(\nameddright{E}{p}{B}{}{Z}\) 
has $Z=\ast$ then $E=B$ and $p$ is the identity map, so~(\ref{reldef}) implies that 
$[f,g]_{r}$ is homotopic to the composite along the lower row of the diagram, which 
with $p=1$ is the usual Whitehead product $[f,g]$. 
\end{remark} 

\begin{remark} 
\label{natremark1} 
The naturality of the construction of $\Gamma$ and $\Theta$, and the 
naturality of the homotopy class of $\lambda$, implies that the relative 
Whitehead product is natural, up to homotopy, for maps of principal fibrations 
\[\diagram 
        E\rto\dto^{p} & E'\dto^{p'} \\
        B\rto & B' 
  \enddiagram\] 
and maps 
\[\diagram 
        \Sigma X\rto^-{f}\dto^{\Sigma a} & B\dto 
           & & \Sigma Y\rto^-{g}\dto^{\Sigma b} & B\dto \\ 
        \Sigma X'\rto^-{f'} & B' & & \Sigma Y'\rto^-{g'} & B'. 
  \enddiagram\] 
\end{remark} 

It will be useful in what follows to now introduce some homotopies. 
Let $X$ and $Y$ be path-connected pointed spaces. Let $X\ltimes Y$ be 
the \emph{left half-smash} of $X$ and $Y$, defined as the quotient space 
$(X\times Y)/\sim$ where $(x,\ast)\sim\ast$. Let 
\(q\colon\namedright{X\times Y}{}{X\ltimes Y}\) 
be the quotient map. Observe that there is a pushout diagram 
\begin{equation}
  \label{halfsmashquotient} 
  \diagram 
       X\rto\dto & X\times Y\rto^-{q}\dto & X\ltimes Y\ddouble \\ 
       CX\rto & CX\cup_{X} X\times Y\rto^-{e} & X\ltimes Y 
  \enddiagram 
\end{equation}  
where $e$ collapes the cone to a point. Since $CX$ is (naturally) contractible, 
$e$ is a natural homotopy equivalence. In our context, the space 
$C\Omega B\cup_{\Omega B}\Omega B\times E$ that is the homotopy fibre of  
the pinch map 
\(\namedright{B\vee E}{p_{1}}{B}\) 
is naturally homotopy equivalent to $\Omega B\ltimes E$. 

Let 
\begin{equation} 
  \label{epsequiv} 
  \epsilon\colon\namedright{X\ltimes Y}{}{CX\cup_{X} X\times Y} 
\end{equation}  
be a natural right homotopy inverse of $e$. Considering the spaces and 
maps in~(\ref{reldef}), let $\overline{\Gamma}$ be the composite 
\[\overline{\Gamma}\colon\nameddright{\Omega B\ltimes E}{\epsilon}  
     {C\Omega B\cup_{\Omega B}\Omega B\times E}{\Gamma}{E}\] 
and let $\overline{\lambda}$ be the composite 
\[\overline{\lambda}\colon\nameddright{\Sigma X\wedge Y}{\lambda} 
      {C\Omega\Sigma X\cup_{\Omega\Sigma X}\Omega\Sigma X\times\Sigma Y} 
      {e}{\Omega\Sigma X\ltimes\Sigma Y}.\] 
Then the definitions of $\overline{\Gamma}$, $\overline{\lambda}$ and $\Theta$, and 
the naturality of $\epsilon$, imply that there is a homotopy commutative diagram 
\begin{equation} 
  \label{htpyreldef} 
  \diagram 
      \Sigma X\wedge Y\rto^-{\overline{\lambda}}\ddouble  
          & \Omega\Sigma X\ltimes\Sigma Y\rto^-{\Omega f\ltimes g}\dto^{\epsilon} 
          & \Omega B\ltimes E\rto^-{\overline{\Gamma}}\dto^{\epsilon} & E\ddouble \\ 
      \Sigma X\wedge Y\rto^-{\lambda} 
         & C\Omega\Sigma X\cup_{\Omega\Sigma X}\Omega\Sigma X\times\Sigma Y  
                   \rto^-{\Theta} 
         & C\Omega B\cup_{\Omega B}\Omega B\times E\rto^-{\Gamma} & E. 
  \enddiagram 
\end{equation}  
Combining~(\ref{reldef}) and~(\ref{htpyreldef}) and the definition of the 
relative Whitehead product $[f,g]_{r}$ as $\Gamma\circ\Theta\circ\lambda$, 
we immediately obtain the following. 

\begin{lemma} 
   \label{reldefhtpy} 
    There is a homotopy commutative diagram  
    \[\diagram 
           & \Omega\Sigma X\ltimes\Sigma Y\rto^-{\Omega f\ltimes g}\dto 
              & \Omega B\ltimes E\rto^-{\overline{\Gamma}}\dto & E\dto^{p} \\ 
           \Sigma X\wedge Y\rto^-{W}\urto^-{\overline{\lambda}} 
              & \Sigma X\vee\Sigma Y\rto^-{f\vee g} 
              & B\vee E\rto^-{1\vee p} & B  
     \enddiagram\] 
   and the relative Whitehead product 
   \(\lnamedright{\Sigma X\wedge Y}{[f,g]_{r}}{E}\) 
   is homotopic to the composite 
   $\overline{\Gamma}\circ(\Omega f\ltimes g)\circ\overline{\lambda}$.~$\qqed$ 
\end{lemma}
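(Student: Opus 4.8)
The plan is to assemble the lemma from the two diagrams that have already been established, namely~(\ref{reldef}) and~(\ref{htpyreldef}). The strategy is purely diagrammatic: glue~(\ref{htpyreldef}) onto the bottom rows of~(\ref{reldef}) and read off the outer rectangle. Concretely, first I would recall that the top row of~(\ref{reldef}) reads $\lambda$ followed by $\Theta$ followed by $\Gamma$, landing in $E$; that the left triangle commutes by~(\ref{lambdalift}); the middle square strictly commutes by the naturality of Lemma~\ref{fibmodel}; and the right square strictly commutes by Proposition~\ref{keysquare}. Hence the whole of~(\ref{reldef}) is homotopy commutative, and in particular the bottom row $(1\vee p)\circ(f\vee g)\circ W$ is, by naturality of the Whitehead product $W=[i_{1},i_{2}]$, homotopic to $[f,p\circ g]$, with $\Gamma\circ\Theta\circ\lambda=[f,g]_{r}$ lifting it through $p$.

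Next I would bring in~(\ref{htpyreldef}). That diagram has the maps $\overline{\lambda}$, $\Omega f\ltimes g$, $\overline{\Gamma}$ along its top edge (into $E$), the maps $\lambda$, $\Theta$, $\Gamma$ along its bottom edge, and vertical maps that are identities on the two outer columns and $\epsilon$ (a natural right homotopy inverse of the collapse map $e$) on the two middle columns; all squares commute up to homotopy. The key observation is that~(\ref{htpyreldef}) identifies the composite $\overline{\Gamma}\circ(\Omega f\ltimes g)\circ\overline{\lambda}$ along its top edge with the composite $\Gamma\circ\Theta\circ\lambda$ along its bottom edge, since the two outer verticals are identities. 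But $\Gamma\circ\Theta\circ\lambda$ is by definition $[f,g]_{r}$, so this already gives the second assertion of the lemma: $[f,g]_{r}\simeq\overline{\Gamma}\circ(\Omega f\ltimes g)\circ\overline{\lambda}$.

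For the first assertion — the homotopy commutative diagram displayed in Lemma~\ref{reldefhtpy} — I would build it by juxtaposing the half-smash top rows of~(\ref{htpyreldef}) with the bottom ``base'' row of~(\ref{reldef}). The left triangle of the target diagram, asserting $\overline{\lambda}$ is a lift of $W$ through the composite $\Sigma X\vee\Sigma Y\to B\vee E\to\cdots$, follows because $\overline{\lambda}=e\circ\lambda$ and $\lambda$ is a lift of $W$ through $\psi$ (from~(\ref{lambdalift})), combined with the fact that the half-smash model $\Omega\Sigma X\ltimes\Sigma Y$ receives $\psi$ compatibly via $e$; one pushes the relation $\psi\circ\epsilon\circ e\simeq\psi$ through. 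The left square, $(f\vee g)\circ W$ versus the half-smash map $\Omega f\ltimes g$ applied after the lift, commutes by the naturality of the map from the homotopy fibre described in Lemma~\ref{fibmodel} transported across $e$; and the right square is exactly Proposition~\ref{keysquare} precomposed with $\epsilon$, i.e.\ the definition of $\overline{\Gamma}$. Stacking these, the outer boundary gives precisely the claimed diagram.

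The only genuine subtlety — and the step I expect to require the most care — is the bookkeeping around the natural homotopy equivalence $e\colon C\Omega B\cup_{\Omega B}\Omega B\times E\to\Omega B\ltimes E$ and its chosen right inverse $\epsilon$: one must make sure that replacing the cone model by the half-smash model genuinely preserves commutativity up to homotopy, i.e.\ that $\overline{\Gamma}=\Gamma\circ\epsilon$ and $\overline{\lambda}=e\circ\lambda$ slot correctly into both the middle square (naturality of Lemma~\ref{fibmodel}, now stated for half-smashes via~(\ref{halfsmashquotient})) and the left triangle (compatibility of the lift $\lambda$ with $e$). Everything else is formal diagram-chasing: the content has been front-loaded into Proposition~\ref{keysquare}, the naturality statement in Lemma~\ref{fibmodel}, diagram~(\ref{lambdalift}), and diagram~(\ref{htpyreldef}), so the proof is essentially the sentence ``combine~(\ref{reldef}) and~(\ref{htpyreldef})'' expanded with enough care to track the half-smash replacement.
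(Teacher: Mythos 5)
Your proposal is correct and is essentially the paper's own argument: the paper presents Lemma~\ref{reldefhtpy} as an immediate consequence of combining~(\ref{reldef}) and~(\ref{htpyreldef}) with the definition $[f,g]_{r}=\Gamma\circ\Theta\circ\lambda$, which is precisely what you do, with the added (and correct) bookkeeping around $\overline{\lambda}=e\circ\lambda$, $\overline{\Gamma}=\Gamma\circ\epsilon$, and $\epsilon\circ e\simeq 1$.
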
 

Note that 
$\overline{\Gamma}\circ(\Omega f\ltimes g)\circ\overline{\lambda}$ 
satisfies the same naturality properties as $\Gamma\circ\Theta\circ\lambda$ 
stated in Remark~\ref{natremark}.   

Consider the diagram 
\[\diagram 
       \Omega B\times E\dto\drrto^{\theta} & & \\ 
       C\Omega B\cup_{\Omega B}\Omega B\times E\rrto^(0.55){\Gamma}\dto^{e} & & E \\ 
       \Omega B\ltimes E\urrto_-{\overline{\Gamma}} & &  
  \enddiagram\]  
where $\theta$ is the composite 
\(\lnameddright{\Omega B\times E}{\Omega\varphi\times 1}  
      {\Omega Z\times E}{a}{E}\)   
and $a$ is the action from the principal fibration 
\(\nameddright{\Omega Z}{}{E}{}{B}\). 
The upper triangle commutes by~(\ref{Gammadgrm}). The lower triangle 
homotopy commutes since, by definition, $\overline{\Gamma}=\Gamma\circ\epsilon$, 
and as $\epsilon\circ e$ is homotopic to the identity map on $\Omega B\ltimes E$, 
we obtain $\overline{\Gamma}\circ e\simeq\Gamma$. Also, 
by~(\ref{halfsmashquotient}), the left vertical composite is the quotient map $q$. 
This establishes the following proposition, which encapsulates the connection 
between the action of a principal fibration and the relative Whitehead product. 

\begin{proposition} 
   \label{Gammabartheta} 
   Let 
   \(\nameddright{\Omega Z}{}{E}{}{B}\) 
   be a principal fibration induced by a map 
   \(\namedright{B}{\varphi}{Z}\). 
   Let~$\theta$ be the composite 
   \(\lnameddright{\Omega B\times E}{\Omega\varphi\times 1}{\Omega Z\times E}{a}{E}\) 
   where $a$ is the action associated to the principal fibration. 
   Then there is a homotopy commutative diagram 
   \[\diagram 
          \Omega B\times E\dto^{q}\drrto^{\theta} & & \\ 
          \Omega B\ltimes E\rrto^(0.55){\overline{\Gamma}} & & E  
     \enddiagram\]  
   which is natural for maps of principal fibrations. 
\end{proposition}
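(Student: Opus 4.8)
The plan is to route both legs of the asserted triangle through the intermediate space $C\Omega B\cup_{\Omega B}\Omega B\times E$ --- the model for the homotopy fibre of the pinch map supplied by Lemma~\ref{fibmodel} --- and then exploit that the cone-collapsing map $e$ out of it is a homotopy equivalence. Write $\iota\colon\namedright{\Omega B\times E}{}{C\Omega B\cup_{\Omega B}\Omega B\times E}$ for the inclusion of the evident subspace. Three facts then suffice. First, by the pushout in~(\ref{Gammadgrm}) that defines $\Gamma$, the restriction of $\Gamma$ to $\Omega B\times E$ is $\theta$ on the nose, i.e.\ $\Gamma\circ\iota=\theta$. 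Second, comparing $\iota$ with the middle vertical map of the pushout square~(\ref{halfsmashquotient}) taken with $X=\Omega B$ and $Y=E$, the composite $e\circ\iota$ is exactly the quotient map $q\colon\namedright{\Omega B\times E}{}{\Omega B\ltimes E}$. Third, $\overline{\Gamma}\circ e\simeq\Gamma$, which is immediate from the definition $\overline{\Gamma}=\Gamma\circ\epsilon$ together with $\epsilon\circ e\simeq\mathrm{id}$.

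Chaining these,
\[
\overline{\Gamma}\circ q=\overline{\Gamma}\circ e\circ\iota\simeq\Gamma\circ\iota=\theta,
\]
which is precisely the homotopy commutativity required. Equivalently --- and this is really all the proof needs to say --- one reads off the two triangles in the diagram displayed immediately before the statement: the upper triangle commutes strictly by~(\ref{Gammadgrm}); the left-hand vertical composite is $q$ by~(\ref{halfsmashquotient}); and the lower triangle commutes up to homotopy because $\overline{\Gamma}\circ e=\Gamma\circ\epsilon\circ e\simeq\Gamma$.

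For naturality with respect to maps of principal fibrations I would note that every ingredient is natural: $\Gamma$ is natural because it is assembled from the natural fibre model of Lemma~\ref{fibmodel} and the natural action square~(\ref{actionproj}); the map $e$ and the whole pushout~(\ref{halfsmashquotient}) are natural; $\epsilon$ was chosen to be a \emph{natural} right homotopy inverse of $e$; the quotient map $q$ and the inclusion $\iota$ are natural; and $\theta$ is natural because the principal action $a$ is. The only genuinely new point to check is that the homotopy $\overline{\Gamma}\circ q\simeq\theta$ is itself natural, which holds because the sole homotopy invoked in the chain above is $\epsilon\circ e\simeq\mathrm{id}$, part of the stated naturality of $\epsilon$. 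I do not anticipate a real obstacle: the statement merely packages results already in hand, and the only care needed is the bookkeeping that $\Gamma\circ\iota$ equals $\theta$ strictly (not merely up to homotopy) and that the naturality of $\epsilon$ carries over to the single homotopy produced.
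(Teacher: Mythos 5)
Your proof is correct and is essentially identical to the paper's: both arguments factor $q$ as $e\circ\iota$, use the strict commutativity $\Gamma\circ\iota=\theta$ from the pushout~(\ref{Gammadgrm}), and invoke $\overline{\Gamma}\circ e=\Gamma\circ\epsilon\circ e\simeq\Gamma$. The naturality discussion also matches the paper's brief remarks.
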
  

Next, we aim to better identify the maps involved in the homotopy 
commutative diagram in Lemma~\ref{reldefhtpy}. This requires two 
preliminary lemmas. 

\begin{lemma} 
   \label{halfsmash} 
   Let $Q$ and $R$ be pointed spaces. Then there is a homotopy 
   equivalence $Q\ltimes\Sigma R\simeq (\Sigma Q\wedge R)\vee\Sigma R$. 
   Further, this decomposition is natural for maps 
   \(\namedright{Q}{}{Q'}\) 
   and 
   \(\namedright{R}{}{R'}\). 
\end{lemma}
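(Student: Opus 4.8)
The plan is to realize $Q\ltimes R$ as the total space of a \emph{split} cofibration and to use the comultiplication on $R$ to write down the splitting map explicitly. Write $Q\ltimes R=(Q\times R)/(Q\times\ast)$. There is a cofibration \(\nameddright{R}{i_{R}}{Q\ltimes R}{\pi}{Q\wedge R}\), where $i_{R}$ is the inclusion of the copy of $\{\ast\}\times R$ and $\pi$ is the quotient collapsing that copy. The projection \(\namedright{Q\times R}{}{R}\) descends to a retraction \(\namedright{Q\ltimes R}{r}{R}\) of $i_{R}$, so $\Sigma i_{R}$ is a split monomorphism and hence, in the Puppe sequence, the connecting map \(\namedright{Q\wedge R}{\partial}{\Sigma R}\) is null homotopic. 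Therefore the cofibration is split on reduced homology, $\rhlgy{Q\ltimes R}\cong\rhlgy{R}\oplus\rhlgy{Q\wedge R}$, with $r_{*}$ and $\pi_{*}$ realized as the two projections. None of this uses that $R$ is a co-$H$-space.

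Now let \(\namedright{R}{\nu}{R\vee R}\) be a comultiplication. Since $(-)\ltimes(-)$ is a bifunctor and $Q\ltimes(R\vee R)\cong(Q\ltimes R)\vee(Q\ltimes R)$ naturally, I define $\phi$ to be the composite \(\nameddright{Q\ltimes R}{1\ltimes\nu}{(Q\ltimes R)\vee(Q\ltimes R)}{\pi\vee r}{(Q\wedge R)\vee R}\), and claim it is a homotopy equivalence. First, $\phi$ is a homology isomorphism: composing each pinch map \(\namedright{R\vee R}{}{R}\) with $\nu$ gives a map homotopic to $1_{R}$, hence composing $1\ltimes\nu$ with each pinch \(\namedright{(Q\ltimes R)\vee(Q\ltimes R)}{}{Q\ltimes R}\) gives a map homotopic to $1_{Q\ltimes R}$, so $(1\ltimes\nu)_{*}$ is the diagonal; following with the projections $\pi_{*}\vee r_{*}$ then yields an isomorphism onto $\rhlgy{Q\wedge R}\oplus\rhlgy{R}$. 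Second, $\phi$ is a $\pi_{1}$-isomorphism: one checks $\pi_{1}(Q\ltimes R)\cong\pi_{1}(R)\cong\pi_{1}((Q\wedge R)\vee R)$, and that $\phi\circ i_{R}$ is homotopic to the inclusion of $R$ (because $i_{R}$ is induced by $\{\ast\}\hookrightarrow Q$, so $(1\ltimes\nu)\circ i_{R}\simeq(i_{R}\vee i_{R})\circ\nu$ and one applies the co-$H$ identity $\mathrm{pinch}\circ\nu\simeq 1_{R}$), so $\phi_{*}$ carries the isomorphism $(i_{R})_{*}$ to an isomorphism. Passing to universal covers and applying the Whitehead theorem, $\phi$ is a homotopy equivalence. (Alternatively one can build an explicit homotopy inverse out of $i_{R}$ and a homotopy section of $\pi$—which exists since $\partial$ is null—and check the two composites are "triangular", hence equivalences; the $\pi_{1}$-route above is cleaner to write.)

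For naturality, note that for a co-$H$ map \(\namedright{R}{}{R'}\) the maps $i_{R}$, $\pi$, $r$ and the identification $Q\ltimes(R\vee R)\cong(Q\ltimes R)\vee(Q\ltimes R)$ are all natural in $R$, while $\nu$ is natural up to homotopy for co-$H$ maps by definition; hence $\phi$ is natural up to homotopy, which is the asserted naturality of the decomposition.

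The step I expect to require the most care is the passage from "$\phi$ is a homology isomorphism" to "$\phi$ is a homotopy equivalence" when $R$ is a co-$H$-space that need not be nilpotent; this is exactly what the simultaneous $\pi_{1}$-isomorphism statement, together with the universal-cover form of the Whitehead theorem, is for, and it is in any case harmless for the simply-connected spaces occurring in the applications. The remaining work—bookkeeping the naturality identities for $1\ltimes\nu$ and verifying $\phi\circ i_{R}\simeq$ (inclusion of $R$)—is routine.
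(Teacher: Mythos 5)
Your map $\phi=(\pi\vee r)\circ(1\ltimes\nu)$ is exactly the paper's construction, and your naturality argument is the same. You additionally fill in the verification that $\phi$ is a homotopy equivalence via homology, $\pi_{1}$, and the Whitehead theorem --- a step the paper asserts without detail --- which is a welcome addition, though it tacitly assumes the spaces have the homotopy type of CW complexes (true in all the paper's applications).
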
 

\begin{proof} 
In general, if $T$ is a pointed space then there is a homeomorphism 
$Q\ltimes T\cong (Q_{+})\wedge T$, where $Q_{+}$ is the union of $Q$ and a disjoint 
basepoint. Note that $\Sigma(Q_{+})\simeq\Sigma Q\vee S^{1}$. Therefore, if $T=\Sigma R$ 
then there are homotopy equivalences 
\[Q\ltimes\Sigma R\cong (Q_{+})\wedge\Sigma R\simeq\Sigma(Q_{+})\wedge R\simeq 
     (\Sigma Q\vee S^{1})\wedge R\simeq (\Sigma Q\wedge R)\vee\Sigma R.\] 
The naturality of the decomposition follows from the naturality of each homotopy equivalence.   
\end{proof} 

In general, if $X$ and $Y$ are path-connected spaces and 
\(j_{1}\colon\namedright{X}{}{X\vee Y}\) 
and 
\(j_{2}\colon\namedright{Y}{}{X\vee Y}\) 
are the inclusions, then Ganea~\cite{Ga} showed that there is 
a homotopy fibration 
\[\llnameddright{\Sigma\Omega X\wedge\Omega Y}{[ev_{1},ev_{2}]}{X\vee Y}{}{X\times Y}\] 
where the right map is the inclusion of the wedge into the product and $[ev_{1},ev_{2}]$ 
is the Whitehead product of the composites 
\(ev_{1}\colon\nameddright{\Sigma\Omega X}{ev}{X}{j_{1}}{X\vee Y}\) 
and 
\(ev_{2}\colon
\nameddright{\Sigma\Omega Y}{ev}{Y}{i_{2}}{X\vee Y}\). 
Further, this fibration splits after looping, that is, $\Omega [ev_{1},ev_{2}]$ has a left 
homotopy inverse. We will use Ganea's result to prove the following lemma.     
      
\begin{lemma} 
   \label{halfsmashsusp} 
   Let $X$ and $Y$ be path-connected spaces. Then there is a homotopy equivalence 
   \(e\colon\namedright{(\Sigma\Omega X\wedge Y)\vee\Sigma Y}{}{\Omega X\ltimes\Sigma Y}\) 
   that satisfies a homotopy commutative diagram 
   \[\diagram 
          (\Sigma\Omega X\wedge Y)\vee \Sigma Y 
                      \rrto^-{e}\drto^(0.6){[ev_{1},j_{2}]+j_{2}} 
               & & \Omega X\ltimes\Sigma Y\dlto \\ 
          & X\vee\Sigma Y. & 
      \enddiagram\]  
      Further, this homotopy equivalence is natural for maps 
      \(\namedright{X}{}{X'}\) 
      and 
      \(\namedright{Y}{}{Y'}\).  
\end{lemma}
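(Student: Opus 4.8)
The plan is to build the equivalence $e$ one wedge summand at a time, using the explicit model $C\Omega Q\cup_{\Omega Q}\Omega Q\times\Sigma R$ of Lemma~\ref{fibmodel} for the homotopy fibre of the pinch map \(\namedright{Q\vee\Sigma R}{p_{1}}{Q}\), together with the map $\psi$ out of that fibre; the natural equivalence of~(\ref{halfsmashquotient}) then lets us pass to $\Omega Q\ltimes\Sigma R$. On the $\Sigma R$ summand take the map $\Sigma R\to C\Omega Q\cup_{\Omega Q}\Omega Q\times\Sigma R$ sending $x$ to $(c_{\ast},x)\in\Omega Q\times\Sigma R$, where $c_{\ast}$ is the constant loop; this corresponds under~(\ref{halfsmashquotient}) to the inclusion of $\ast\times\Sigma R$ into $\Omega Q\ltimes\Sigma R$, and by the formula for $\psi$ in Lemma~\ref{fibmodel} (which reads off the second coordinate) its composite with $\psi$ is $j_{2}$.

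For the $\Sigma\Omega Q\wedge R$ summand the Whitehead product enters through naturality in $Q$ via the evaluation. The map $ev\vee\mathrm{id}\colon\Sigma\Omega Q\vee\Sigma R\to Q\vee\Sigma R$, built from \(\namedright{\Sigma\Omega Q}{ev}{Q}\) and $\mathrm{id}_{\Sigma R}$, lies over $ev$; by the naturality clause of Lemma~\ref{fibmodel} it induces a map $\Theta$ of the homotopy fibres of the two pinch maps, compatible with the maps $\psi$. In the source, the fibre of the pinch map is $C\Omega\Sigma\Omega Q\cup_{\Omega\Sigma\Omega Q}\Omega\Sigma\Omega Q\times\Sigma R$, and diagram~(\ref{lambdalift}), taken with $\Sigma X=\Sigma\Omega Q$ and $\Sigma Y=\Sigma R$, provides the lift $\lambda$ of the universal Whitehead product $W=[i_{1},i_{2}]$, with $\psi\circ\lambda\simeq W$. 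Set $\mu_{1}=\Theta\circ\lambda$. The compatibility of $\psi$ with $\Theta$ and the naturality of the Whitehead product then give
\[\psi\circ\mu_{1}\simeq(ev\vee\mathrm{id})\circ[i_{1},i_{2}]\simeq[j_{1}\circ ev,\,j_{2}]=[ev_{1},j_{2}].\]

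Now let $e\colon(\Sigma\Omega Q\wedge R)\vee\Sigma R\to\Omega Q\ltimes\Sigma R$ be the map which is $\mu_{1}$ on the first summand and the section above on the second, both pushed across the equivalence of~(\ref{halfsmashquotient}). By the previous two paragraphs the composite of $e$ with the map from the fibre to $Q\vee\Sigma R$ is homotopic to $[ev_{1},j_{2}]+j_{2}$. It remains to see that $e$ is a homotopy equivalence. Consider the cofibration \(\nameddright{\Sigma R}{}{\Omega Q\ltimes\Sigma R}{}{\Omega Q\wedge\Sigma R}\), collapsing $\ast\times\Sigma R$, and its analogue \(\nameddright{\Sigma R}{}{(\Sigma\Omega Q\wedge R)\vee\Sigma R}{}{\Sigma\Omega Q\wedge R}\). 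Then $e$ is the identity on the subspaces $\Sigma R$, so it is enough to show the induced map $\bar e\colon\Sigma\Omega Q\wedge R\to\Omega Q\wedge\Sigma R$ on cofibres is an equivalence. Collapsing the cone and the axis $\ast\times\Sigma R$ in each fibre shows that $\bar e$ factors as \(\nameddright{\Sigma\Omega Q\wedge R}{c'\circ\lambda}{\Omega\Sigma\Omega Q\wedge\Sigma R}{\Omega(ev)\wedge\mathrm{id}}{\Omega Q\wedge\Sigma R}\), where $c'$ is the analogous collapse of the fibre over $\Sigma\Omega Q$. The map $c'\circ\lambda$ is the inclusion of the bottom James cell, namely $E\wedge\mathrm{id}_{\Sigma R}$ with \(\namedright{\Omega Q}{E}{\Omega\Sigma\Omega Q}\) the suspension; since $\Omega(ev)\circ E=\mathrm{id}_{\Omega Q}$ by the triangle identity for the loop--suspension adjunction, $\bar e\simeq\mathrm{id}$. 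A comparison of the two cofibrations via the five lemma in homology and the Whitehead theorem (all spaces in sight being simply connected, as $\Sigma R$ is) then shows $e$ is an equivalence. Naturality of $e$ for maps \(\namedright{Q}{}{Q'}\) and \(\namedright{R}{}{R'}\) follows because each ingredient is natural: Lemmas~\ref{fibmodel} and~\ref{halfsmash}, the evaluation $ev$, the Whitehead product $W$, the equivalence of~(\ref{halfsmashquotient}), and the homotopy class of the lift $\lambda$ (natural by the remark after~(\ref{lambdalift})).

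The main obstacle is the last step, and within it the claim that $c'\circ\lambda\simeq E\wedge\mathrm{id}_{\Sigma R}$: one must show that the uniquely determined lift $\lambda$ of $W=[i_{1},i_{2}]$ realises the bottom cell of the James splitting of $\Sigma\Omega\Sigma\Omega Q$ inside the split fibre. I would prove this by a direct chase through the explicit model $C\Omega\Sigma\Omega Q\cup_{\Omega\Sigma\Omega Q}\Omega\Sigma\Omega Q\times\Sigma R$ and the co-$H$ splitting of Lemma~\ref{halfsmash}, or, lacking a clean direct argument, by a homology computation using $H_{*}(\Omega\Sigma\Omega Q)\cong T(\widetilde{H}_{*}(\Omega Q))$ together with the uniqueness of $\lambda$ that follows from the null-homotopy of the connecting map in~(\ref{lambdalift}).
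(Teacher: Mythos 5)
Your approach is genuinely different from the paper's. The paper constructs $e$ by first identifying the homotopy fibre of the inclusion $Q\vee\Sigma R\to Q\times\Sigma R$ as the join $\Omega Q\ast\Omega\Sigma R$ (following Ganea), then builds a map $s\colon \Omega Q\ast R\to\Omega Q\ltimes\Sigma R$ whose cofibre is $\Sigma R$ with an explicit section $t$, giving $e=\nabla\circ(s\vee t)$ for free and identifying the composite with $Q\vee\Sigma R$ via Ganea's theorem that the join maps into $Q\vee\Sigma R$ by the Whitehead product $[ev_1,ev_2]$. You instead build the two summands of $e$ separately, using the lift $\lambda$ from (\ref{lambdalift}) together with naturality of the fibre model applied to $ev\vee\mathrm{id}$, and then try to verify that the resulting map $e$ is an equivalence by comparing cofibrations. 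Your identification of $\psi\circ\mu_1$ with $[ev_1,j_2]$ by naturality of the Whitehead product is correct, as is the section on the $\Sigma R$ summand.

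The problem is the one you yourself flag: verifying that $\bar e$ is an equivalence reduces to showing the collapse $c'\circ\lambda$ of $\lambda$ to the smash $\Omega\Sigma\Omega Q\wedge\Sigma R$ is the James bottom-cell inclusion $\eta\wedge\mathrm{id}_{\Sigma R}$ (under $\Sigma\Omega Q\wedge R\cong\Omega Q\wedge\Sigma R$). You sketch two ways to prove this but carry out neither, so the proof is incomplete at the step on which the whole verification that $e$ is an equivalence rests. Moreover, this missing claim is essentially Proposition~\ref{lambdaid}, which the paper proves \emph{after} Lemma~\ref{halfsmashsusp} and whose proof \emph{uses} Lemma~\ref{halfsmashsusp}; so invoking it here would be circular, and you would need a genuinely independent argument (e.g.\ a direct chase through the explicit null-homotopy of $p_1\circ W$ giving $\lambda$, or a homology computation in $H_*(\Omega\Sigma\Omega Q)\cong T(\widetilde{H}_*\Omega Q)$). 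The paper's route sidesteps this entirely: there the equivalence is automatic because $s$ has cofibre $\Sigma R$ with an explicit section, and the Whitehead-product identification comes from Ganea rather than from the lift $\lambda$. A secondary concern: your final appeal to the Whitehead theorem requires the spaces involved to be simply connected, but the lemma only assumes $Q$ path-connected, so $\Omega Q$ need not be connected and $\Omega Q\ltimes\Sigma R$ need not be simply connected; the paper's proof avoids any such hypothesis because it never invokes Whitehead's theorem.
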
  
        
\begin{proof} 
Since the pinch map 
\(\namedright{X\vee\Sigma Y}{p_{1}}{X}\) 
factors as the composite 
\(\nameddright{X\vee\Sigma Y}{}{X\times\Sigma Y}{\pi_{1}}{X}\) 
where $\pi_{1}$ is the projection, using Ganea's result we obtain a homotopy fibration diagram 
\begin{equation} 
  \label{abevcompare} 
  \diagram 
       \Sigma\Omega X\wedge\Omega\Sigma Y\rto^-{a}\ddouble & \Omega X\ltimes\Sigma Y\rto^-{b}\dto 
            & \Sigma Y\dto^{i_{2}} \\ 
       \Sigma\Omega X\wedge\Omega\Sigma Y\rto^-{[ev_{1},ev_{2}]} & X\vee\Sigma Y\rto\dto^-{p_{1}} 
            & X\times\Sigma Y\dto^{\pi_{1}} \\ 
       & X\rdouble & X  
  \enddiagram 
\end{equation} 
where $i_{2}$ is the inclusion of the second factor and $a$ and $b$ are induced by the 
fibration diagram. Let 
\(\eta\colon\namedright{Y}{}{\Omega\Sigma Y}\) 
be the suspension map, which is the adjoint of the identity map on~$\Sigma Y$, and let~$c$ 
be the composite 
\[c\colon\lnameddright{\Sigma\Omega X\wedge Y}{\Sigma 1\wedge\eta} 
     {\Sigma\Omega X\wedge\Omega\Sigma Y}{a}{\Omega X\ltimes\Sigma Y}.\] 
Let $d$ be the composite 
\[d\colon\Sigma Y\hookrightarrow\namedright{\Omega X\times\Sigma Y}{}{\Omega X\ltimes\Sigma Y}\] 
and let 
\[e\colon\namedright{(\Sigma\Omega X\wedge Y)\vee\Sigma Y}{}{\Omega X\ltimes\Sigma Y}\] 
be the wedge sum of $c$ and $d$. 

We will show that $e$ is a homotopy equivalence. 
Granting this for the moment, observe that when~$c$ is composed to $X\vee\Sigma Y$ 
the left square in~(\ref{abevcompare}) implies that we obtain $[ev_{1},ev_{2}]\circ(\Sigma 1\wedge\eta)$. 
By the naturality of the Whitehead product this is homotopic to 
$[ev_{1}, ev_{2}\circ\Sigma\eta]$. As $\Sigma\eta$ is a left homotopy inverse for 
the evaluation map and, by definition, $ev_{2}=j_{2}\circ ev$, we obtain 
$[ev_{1}, ev_{2}\circ\Sigma\eta]\simeq [ev_{1}, j_{2}]$. Next, by Lemma~\ref{fibmodel} 
and the homotopy equivalence~$\epsilon$ in~(\ref{epsequiv}), the composite 
\begin{equation} 
  \label{Eid} 
  \nameddright{\Sigma Y}{d}{\Omega X\ltimes\Sigma Y}{}{X\vee\Sigma Y} 
\end{equation} 
is the inclusion $j_{2}$ of the right wedge summand. Therefore, as $e$ is the wedge 
sum of $c$ and $d$, the composite 
\(\nameddright{(\Sigma\Omega X\wedge Y)\vee\Sigma Y}{e}{\Omega X\ltimes\Sigma Y} 
     {}{X\vee\Sigma Y}\) 
is homotopic to $[ev_{1},j_{2}]+j_{2}$, giving the homotopy commutative diagram 
asserted by the lemma. Naturality then follows by the naturality of the homotopy 
fibration diagram~(\ref{abevcompare}) and of the maps $c$ and $d$. 
 
It remains to show that $e$ is a homotopy equivalence. To do so it suffices to show 
that $\Omega e$ is a homotopy equivalence, for then $\Omega e$ induces an isomorphism 
of homotopy groups and hence so does~$e$, implying that $e$ is a homotopy equivalence. 
(Technically, this is a weak homotopy equivalence, but as all spaces have the homotopy type of 
finite type $CW$-complexes, we obtain an honest homotopy equivalence.) To show that 
$\Omega e$ is a homotopy equivalence, by Dror's extension of Whitehead's Theorem to 
path-connected $H$-spaces~\cite{D}, it suffices to show that 
$\Omega e$ induces an isomorphism in integral homology. Finally, to see this it suffices to show 
that $\Omega e$ induces an isomorphism in homology with field coefficients for any field. 
Assume from now on that homology is taken with field coefficients. 

First, we show that $\Omega c$ has a left homotopy inverse. The homotopy commutativity 
of the upper left square in~(\ref{abevcompare}) implies it suffices to show that the composite 
\[\lllnameddright{\Omega(\Sigma\Omega X\wedge Y)}{\Omega(\Sigma 1\wedge\eta)} 
     {\Omega(\Sigma\Omega X\wedge\Omega\Sigma Y)}{\Omega [ev_{1},ev_{2}]}{\Omega (X\vee\Sigma Y)}\] 
has a left homotopy inverse. Since $\Omega [ev_{1},ev_{2}]$ has a left homotopy inverse, it 
therefore suffices to show that $\Omega(\Sigma 1\wedge\eta)$ has a left homotopy inverse, 
and to show this it suffices to show that~$\Sigma 1\wedge\eta$ has a left homotopy inverse. 
But this is the case since the evaluation map 
\(\namedright{\Sigma\Omega\Sigma Y}{ev}{\Sigma Y}\) 
is a left homotopy inverse for $\Sigma\eta$ and therefore $1\wedge ev$ is a left homotopy 
inverse for ~$1\wedge\Sigma\eta\simeq\Sigma 1\wedge\eta$. 

Second, we show that $d$ is a left homotopy inverse for $b$. As noted above, the composite 
\(\nameddright{\Sigma Y}{d}{\Omega X\ltimes\Sigma Y}{}{X\vee\Sigma Y}\) 
is the inclusion of the right wedge summand, so composing further with  
\(\nameddright{X\vee\Sigma Y}{}{X\times\Sigma Y}{\pi_{2}}{\Sigma Y}\) 
gives the identity map on $\Sigma Y$. The homotopy commutativity of 
the upper right square in~(\ref{abevcompare}) then implies that  
\(\nameddright{\Sigma Y}{d}{\Omega X\ltimes\Sigma Y}{b}{\Sigma Y}\) 
is homotopic to the identity map. 

Now consider homology. In general, if $B$ is a path-connected space and homology is taken 
with a coefficient ring that is a PID, then the Bott-Samelson 
Theorem states that there is an algebra isomorphism $\hlgy{\Omega\Sigma B}\cong T(\rhlgy{B})$, 
where $T(\ \ )$ is the free tensor algebra functor, and the suspension map 
\(\namedright{B}{\eta}{\Omega\Sigma B}\) 
induces the inclusion of the generating set in homology. In our case, Lemma~\ref{halfsmash} 
implies that $\Omega X\ltimes\Sigma Y$ is homotopy equivalent to the suspension 
$\Sigma((\Omega X\wedge Y)\vee Y)$ and the coefficient ring in homology is a field, 
so there is an algebra isomorphism 
$\hlgy{\Omega(\Omega X\ltimes\Sigma Y)}\cong T(V)$, where $V=\rhlgy{(\Omega X\wedge Y)\vee Y}$.  
Since $\Omega c$ has a left homotopy inverse, $(\Omega c)_{\ast}$ is a monomorphism. 
Consequently, as the adjoint of $c$ is homotopic to the composite 
\[\tilde{c}\colon\nameddright{\Omega X\wedge Y}{\eta}{\Omega\Sigma(\Omega X\wedge Y)}
      {\Omega c}{\Omega(\Omega X\ltimes\Sigma Y)}\] 
and $\eta_{\ast}$ is also a monomorphism, we obtain that $\tilde{c}_{\ast}$ is a monomorphism. 
Since $\Omega d$ has a left homotopy inverse, $(\Omega d)_{\ast}$ is a monomorphism. 
Consequently, as the adjoint of $d$ is homotopic to the composite 
\[\tilde{d}\colon\nameddright{Y}{\eta}{\Omega\Sigma Y}{\Omega d}{\Omega(\Omega X\ltimes\Sigma Y)}.\] 
and $\eta_{\ast}$ is also a monomorphism, we obtain that $\tilde{d}_{\ast}$ is a monomorphism. 
Further, since $d$ is the inclusion of $\Sigma Y$ 
into $\Omega X\ltimes\Sigma Y$, the image of $\tilde{d}_{\ast}$ is the submodule $\rhlgy{Y}\subseteq V$. 
Since $b$ is a left homotopy inverse for $d$, the composite 
\(\nameddright{Y}{\tilde{d}}{\Omega(\Omega X\ltimes\Sigma Y)}{\Omega b}{\Omega\Sigma Y}\) 
is a monomorphism with image $\rhlgy{Y}$. In~(\ref{abevcompare}) the composite $b\circ a$ is 
null homotopic since these are consecutive maps in a homotopy fibration, so as $c$ factors 
through $a$, we have $\Omega b\circ\tilde{c}$ null homotopic, and hence 
$(\Omega b)_{\ast}\circ\tilde{c}_{\ast}=0$. Therefore, in reduced homology, the images $C$ 
and $D$ of $\tilde{c}_{\ast}$ and $\tilde{d}$ are disjoint. Thus, as both $\tilde{c}_{\ast}$ and 
$\tilde{d}_{\ast}$ are monomorphisms in reduced homology, their wedge sum is as well and  
it has image $C\oplus D\subseteq T(V)$. As the tensor algebra is free, this implies that there 
is a monomorphism 
\(\namedright{T(C\oplus D)}{}{T(V)}\). 
But this monomorphism is realized by the multiplicative extension (via the James construction) of 
\[\namedright{(\Omega X\wedge Y)\vee Y}{\tilde{c}\vee\tilde{d}}{\Omega(\Omega X\ltimes\Sigma Y)}\] 
to 
\[\upsilon\colon\namedright{\Omega\Sigma((\Omega X\wedge Y)\vee Y)}{} 
     {\Omega(\Omega X\ltimes\Sigma Y)}.\] 
Since the domain and range of $\upsilon$ have the same homotopy type, the fact 
that $\upsilon_{\ast}$ is a monomorphism implies that it is in fact an isomorphism. 
Finally, observe that the definitions of $\tilde{c}$ and $\tilde{d}$ as adjoints implies 
that $\upsilon\simeq\Omega(c\vee d)$, that is, $\upsilon\simeq\Omega e$. Hence 
$\Omega e$ induces an isomorphism in homology. 
\end{proof} 
      
Returning to Lemma~\ref{reldefhtpy} and incorporating Lemma~\ref{halfsmashsusp} 
we obtain a homotopy commutative diagram 
\[\diagram 
      (\Sigma\Omega\Sigma X\wedge Y)\vee\Sigma Y 
             \rto^-{e}\drto_{[ev_{1},j_{2}]+j_{2}} 
        & \Omega\Sigma X\ltimes\Sigma Y\rto^-{\Omega f\ltimes g}\dto 
        & \Omega B\ltimes E\rto^-{\overline{\Gamma}}\dto & E\dto^{p} \\ 
        & \Sigma X\vee\Sigma Y\rto^-{f\vee g} 
        & B\vee E\rto^-{1\vee p} & B.  
\enddiagram\] 
The naturality of the Whitehead product implies that the composite 
in the lower direction of this diagram is homotopic to 
$[f\circ ev,p\circ g]+p\circ g$. We record this as follows. 

\begin{corollary} 
   \label{Whid} 
   There is a homotopy commutative diagram 
    \[\diagram 
        \Omega\Sigma X\ltimes\Sigma Y\rto^-{\Omega f\ltimes g}\dto^{e^{-1}}  
           & \Omega B\ltimes E\rto^-{\overline{\Gamma}} & E\dto^{p} \\ 
        (\Sigma\Omega\Sigma X\wedge Y)\vee\Sigma Y 
              \rrto^-{[f\circ ev,p\circ g]+p\circ g} 
           & & B  
  \enddiagram\] 
\end{corollary}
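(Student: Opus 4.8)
The plan is to assemble the diagram directly from Lemma~\ref{reldefhtpy} and Lemma~\ref{halfsmashsusp}, with the Whitehead product naturality supplying the identification of the bottom map. First I would recall the homotopy commutative diagram of Lemma~\ref{reldefhtpy}: its top-right rectangle reads
\[\diagram
    \Omega\Sigma X\ltimes\Sigma Y\rto^-{\Omega f\ltimes g}\dto
       & \Omega B\ltimes E\rto^-{\overline{\Gamma}}\dto & E\dto^{p} \\
    \Sigma X\vee\Sigma Y\rto^-{f\vee g}
       & B\vee E\rto^-{1\vee p} & B,
  \enddiagram\]
where the left vertical map is the one from Lemma~\ref{fibmodel} that arises as the fibre of the pinch map \(\namedright{\Sigma X\vee\Sigma Y}{}{\Sigma X}\), transported across the equivalence \(e\) of~(\ref{halfsmashquotient}). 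Then I would precompose with the homotopy equivalence \(e\colon\namedright{(\Sigma\Omega\Sigma X\wedge Y)\vee\Sigma Y}{}{\Omega\Sigma X\ltimes\Sigma Y}\) supplied by Lemma~\ref{halfsmashsusp} (taking \(Q=\Sigma X\), \(R=Y\)), whose defining property is that the composite of \(e\) with the fibre-of-pinch map to \(\Sigma X\vee\Sigma Y\) is homotopic to \([ev_{1},j_{2}]+j_{2}\), i.e.\ \([i_{1}\circ ev,i_{2}]+i_{2}\) in the present notation.

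The key step is then to chase the bottom of the enlarged diagram: starting from \((\Sigma\Omega\Sigma X\wedge Y)\vee\Sigma Y\), the left triangle of Lemma~\ref{halfsmashsusp} sends this to \(\Sigma X\vee\Sigma Y\) via \([i_{1}\circ ev,i_{2}]+i_{2}\), and then \(f\vee g\) followed by \(1\vee p\) carries it to \(B\). Since \((1\vee p)\circ(f\vee g)\) restricts to \(f\) on \(\Sigma X\) and to \(p\circ g\) on \(\Sigma Y\), the naturality of the Whitehead product — exactly the principle already invoked in~(\ref{reldef}) to identify the bottom of that diagram with \([f,p\circ g]\) — shows that this composite is homotopic to \([f\circ ev,p\circ g]+p\circ g\). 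Combining this with the commutativity already recorded gives that the outer rectangle, with top edge \(\overline{\Gamma}\circ(\Omega f\ltimes g)\circ e\) and bottom edge \([f\circ ev,p\circ g]+p\circ g\), commutes up to homotopy; rewriting \(e\) as \(e^{-1}\) going the other way (permissible since \(e\) is an equivalence) yields the stated diagram.

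The only mildly delicate point — and the one I would be most careful about — is bookkeeping on the left vertical map. Lemma~\ref{reldefhtpy} expresses the fibre map out of \(\Omega\Sigma X\ltimes\Sigma Y\) only implicitly (as \(e\) postcomposed with \(\psi\)), while Lemma~\ref{halfsmashsusp} characterizes \(e\) against the very same fibre map; so one must check that the ``fibre of the pinch map'' appearing in the two lemmas is literally the same map \(\Omega\Sigma X\ltimes\Sigma Y\to\Sigma X\vee\Sigma Y\), not merely an abstractly equivalent one. This is immediate from the naturality clause of Lemma~\ref{fibmodel} together with the fact that both \(e\)'s are the canonical cone-collapse equivalence of~(\ref{halfsmashquotient}). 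Everything else is formal diagram pasting, so no substantive obstacle remains; the corollary follows by reading off the composite.
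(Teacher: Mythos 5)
Your proposal is correct and follows essentially the same route as the paper: it pastes the right rectangle of Lemma~\ref{reldefhtpy} against the triangle of Lemma~\ref{halfsmashsusp}, then identifies the bottom composite using naturality of the Whitehead product, just as the paper does. Your extra remark checking that the left vertical map in Lemma~\ref{reldefhtpy} is literally the same fibre-of-pinch map that Lemma~\ref{halfsmashsusp} characterizes $e$ against is a sensible point of care, and is indeed settled by the naturality clause of Lemma~\ref{fibmodel}, exactly as you say.
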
  
\vspace{-0.8cm}$\qqed$\medskip 

Corollary~\ref{Whid} says that the map $\overline{\Gamma}\circ(\Omega f\ltimes g)$ 
appearing in the definition of a relative Whitehead product is itself already a lift 
of Whitehead products, up to a homotopy equivalence. 

\begin{remark} 
\label{natremark} 
The naturality at each stage of the construction implies that the homotopy 
commutative diagram in Corollary~\ref{Whid} satisfies the same naturality properties  
listed in Remark~\ref{natremark1}. 
\end{remark} 
         
Finally, we better identify the map $\overline{\lambda}$ in the other part of 
the diagram in Lemma~\ref{reldefhtpy}. Let 
\(\eta\colon\namedright{X}{}{\Omega\Sigma X}\) 
be the suspension map, which is the adjoint of the identity map on $\Sigma X$. 

\begin{proposition} 
   \label{lambdaid} 
   There is a homotopy commutative diagram 
   \[\diagram 
            & \Sigma\Omega\Sigma X\wedge Y\rto^-{include} 
               & (\Omega\Sigma X\wedge\Sigma Y)\vee\Sigma Y\rto^-{e} 
               & \Omega\Sigma X\ltimes\Sigma Y\dto \\ 
            \Sigma X\wedge Y\xto[rrr]^-{W}\urto^{\Sigma\eta\wedge 1} 
               & & & \Sigma X\vee\Sigma Y. 
     \enddiagram\] 
   Consequently, the map 
   \(\namedright{\Sigma X\wedge Y}{\overline{\lambda}}{\Omega\Sigma X\ltimes\Sigma Y}\)  
   in Lemma~\ref{reldefhtpy} can be chosen to be the composite 
   \(L\colon\lnamedright{\Sigma X\wedge Y}{\Sigma\eta\wedge 1} 
         {\Sigma\Omega\Sigma X\wedge\Sigma Y}\hookrightarrow 
         \namedright{(\Omega\Sigma X\wedge\Sigma Y)\vee\Sigma Y}{e} 
         {\Omega\Sigma X\ltimes Y}\). 
\end{proposition}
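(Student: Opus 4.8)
The plan is to recognise the composite $L$ defined in the statement as a lift of the Whitehead product $W=[i_{1},i_{2}]$ through the natural map \(\namedright{\Omega\Sigma X\ltimes\Sigma Y}{}{\Sigma X\vee\Sigma Y}\) induced by taking the homotopy fibre of the pinch map onto $\Sigma X$, and then to invoke uniqueness of that lift. For the uniqueness: as recorded in the discussion surrounding~(\ref{lambdalift}), this fibre map is, up to homotopy equivalence, the fibre inclusion $\psi$, whose own homotopy fibre is $\Omega\Sigma X$ with null homotopic connecting map $\partial$ (because $\Omega p_{1}$ has a right homotopy inverse); hence $W$ lifts uniquely up to homotopy through $\psi$, and therefore also through \(\namedright{\Omega\Sigma X\ltimes\Sigma Y}{}{\Sigma X\vee\Sigma Y}\). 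By Lemma~\ref{reldefhtpy} the map $\overline{\lambda}$ is one such lift, so once $L$ is exhibited as another the relation $\overline{\lambda}\simeq L$ follows, which is the ``Consequently'' clause.

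So the real content is the homotopy commutative diagram in the statement, i.e. that the composite of $L$ with \(\namedright{\Omega\Sigma X\ltimes\Sigma Y}{}{\Sigma X\vee\Sigma Y}\) is homotopic to $W$. I would apply Lemma~\ref{halfsmashsusp} with $Q=\Sigma X$ and $R=Y$: it identifies the composite \(\nameddright{(\Sigma\Omega\Sigma X\wedge Y)\vee\Sigma Y}{e}{\Omega\Sigma X\ltimes\Sigma Y}{}{\Sigma X\vee\Sigma Y}\) with $[ev_{1},j_{2}]+j_{2}$, where $ev_{1}$ is the composite \(\nameddright{\Sigma\Omega\Sigma X}{ev}{\Sigma X}{j_{1}}{\Sigma X\vee\Sigma Y}\) and $j_{1},j_{2}$ are the wedge inclusions of $\Sigma X$ and $\Sigma Y$. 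Restricting along the inclusion of the first wedge summand collapses the $j_{2}$ term, so this restriction is homotopic to $[ev_{1},j_{2}]$; consequently the composite of $L$ with \(\namedright{\Omega\Sigma X\ltimes\Sigma Y}{}{\Sigma X\vee\Sigma Y}\) is homotopic to $[ev_{1},j_{2}]\circ(\Sigma\eta\wedge 1)$.

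It then remains to recognise $[ev_{1},j_{2}]\circ(\Sigma\eta\wedge 1)$ as $W$. Regarding $\Sigma\eta\wedge 1$ as $\Sigma(\eta\wedge 1_{Y})$ via the coherence of the smash product, naturality of the Whitehead product in its first variable gives $[ev_{1},j_{2}]\circ\Sigma(\eta\wedge 1_{Y})\simeq[ev_{1}\circ\Sigma\eta,\,j_{2}]$. The adjunction triangle identity for the suspension map says $ev\circ\Sigma\eta\simeq 1_{\Sigma X}$, so $ev_{1}\circ\Sigma\eta=j_{1}\circ ev\circ\Sigma\eta\simeq j_{1}=i_{1}$; since $j_{2}=i_{2}$ this yields $[ev_{1}\circ\Sigma\eta,j_{2}]\simeq[i_{1},i_{2}]=W$. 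Together with the previous step this establishes the diagram, and the uniqueness argument then shows that $\overline{\lambda}$ may be chosen to be $L$.

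The part I expect to require the most care is bookkeeping rather than conceptual: keeping straight the identifications $\Sigma\Omega\Sigma X\wedge Y\cong\Omega\Sigma X\wedge\Sigma Y$, $\Sigma\eta\wedge 1$ versus $\Sigma(\eta\wedge 1_{Y})$, $j_{1},j_{2}$ versus $i_{1},i_{2}$, and the two distinct maps both denoted $e$; applying the naturality of the Whitehead product strictly in the first variable so the $Y$-factor is left untouched; and confirming that the fibre map of Lemma~\ref{halfsmashsusp} specialised to $(Q,R)=(\Sigma X,Y)$ is the same natural map appearing in Lemma~\ref{reldefhtpy}. Beyond Lemmas~\ref{halfsmashsusp} and~\ref{reldefhtpy}, the triangle identity, and the uniqueness of the lift already noted, no further input is needed.
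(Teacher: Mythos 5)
Your proposal is correct and follows essentially the same route as the paper's own proof: identify the upper composite using Lemma~\ref{halfsmashsusp} (restricted to the first wedge summand, giving $[ev_{1},j_{2}]\circ(\Sigma\eta\wedge 1)$), apply naturality of the Whitehead product in the first variable, cancel via the triangle identity $ev\circ\Sigma\eta\simeq 1_{\Sigma X}$ to get $[j_{1},j_{2}]=W$, and then invoke uniqueness of the lift because $\Omega p_{1}$ has a right homotopy inverse (so $\partial$ is null). The only difference is expository care: you make explicit the restriction to the first summand, the coherence $\Sigma\eta\wedge 1=\Sigma(\eta\wedge 1_{Y})$, and the fact that the fibre map in Lemma~\ref{halfsmashsusp} agrees with the one in Lemma~\ref{reldefhtpy}, all of which the paper leaves implicit.
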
 

\begin{proof} 
Recall that $W$ is the Whitehead product $[j_{1},j_{2}]$. Let $L$ be the composite 
comprising the upper direction around the diagram in the statement of the proposition. 
By Lemma~\ref{halfsmashsusp}, $L\simeq [ev_{1},j_{2}]\circ(\Sigma\eta\wedge 1)$. 
By the naturality of the Whitehead 
product, this is homotopic to $[ev_{1}\circ\Sigma\eta, j_{2}]$. As the composite 
\(\nameddright{\Sigma X}{\Sigma\eta}{\Sigma\Omega\Sigma X}{ev}{\Sigma X}\) 
is homotopic to the identity map, we obtain  
$ev_{1}\circ\Sigma\eta=j_{1}\circ ev\circ\Sigma\eta\simeq j_{1}$. Thus 
$L\simeq [j_{1},j_{2}]=W$. 

Consequently, $L$ is a lift of $W$ to the homotopy fibre of the pinch map 
\(\namedright{\Sigma X\vee\Sigma Y}{}{\Sigma Y}\). 
As the loop of the pinch map has a right homotopy inverse, the homotopy 
class of the lift of $W$ to the homotopy fibre is uniquely determined by 
the homotopy class of $W$. Thus we may unambiguously choose the 
lift $\overline{\lambda}$ of $W$ in Lemma~(\ref{htpyreldef}) to be $L$. 
\end{proof}

\section{Relative Whitehead products and the homotopy type of certain pushouts} 
\label{sec:relcube} 

This section fuses a construction in~\cite{GT} with relative Whitehead products. 
Suppose that there is a cofibration 
\(\nameddright{A}{f}{Y}{}{Y\cup CA}\).  
Let 
\(\nameddright{\Omega Z}{}{E'}{}{Y\cup CA}\) 
be a principal fibration induced by a map 
\(\varphi\colon\namedright{Y\cup CA}{}{Z}\).  
Define spaces $Q$ and $E$ and maps~$p$ and $e$ by the iterated pullback 
\begin{equation} 
  \label{DLiterated} 
  \diagram 
      Q\rto\dto & E\rto^-{e}\dto^{p} & E'\dto \\ 
      A\rto^-{f} & Y\rto & Y\cup CA.  
  \enddiagram 
\end{equation}  
Observe that there are principal fibrations 
\(\nameddright{\Omega Z}{}{Q}{}{A}\) 
and 
\(\nameddright{\Omega Z}{}{E}{}{Y}\) 
induced by the composites 
\(\namedddright{A}{f}{Y}{}{Y\cup CA}{\varphi}{Z}\) 
and 
\(\nameddright{Y}{}{Y\cup CA}{\varphi}{Z}\) 
respectively. The map 
\(\namedright{A}{}{Z}\) 
is trivial since it factors through two consecutive maps 
in a cofibration. Thus $Q\simeq\Omega Z\times A$. 
However, there may be many inequivalent choices of a decomposition 
and we wish to choose one such that $E'$ is the pushout of the projection 
\(\namedright{\Omega Z\times A}{}{A}\) 
and an ``action" map 
\(\namedright{\Omega Z\times A}{}{E}\). 

\begin{lemma} 
   \label{Alift} 
   There is a map of pairs 
   \(\namedright{(CA,A)}{}{(E',E)}\) 
   such that the composite 
   \(\nameddright{(CA,A)}{}{(E',E)}{}{(Y\cup CA,Y)}\) 
   is the standard inclusion. 
\end{lemma}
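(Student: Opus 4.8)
The plan is to exhibit the map of pairs explicitly, using the path-space model of the principal fibration. Recall from Section~\ref{sec:principal} that the pullback defining $E'$ gives
\[
   E'=\{(w,\omega)\in(Y\cup CA)\times PZ\mid\omega(1)=\varphi(w)\}
\]
with projection $(w,\omega)\mapsto w$, and that in the iterated pullback~\eqref{DLiterated} the space $E$ is the restriction of $E'$ over $Y$; that is, $E$ is the subspace of those $(w,\omega)\in E'$ with $w\in Y$. Thus to produce a map of pairs $(CA,A)\to(E',E)$ lying over the standard inclusion it suffices to produce a single continuous map $g\colon CA\to E'$ whose composite with the projection onto $Y\cup CA$ is the cone inclusion: the restriction of such a $g$ to $A\subseteq CA$ then automatically lands in $E$, since the cone inclusion carries the slice $A=\{(a,1)\}$ into $Y$ along the attaching map~$f$.

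To define $g$, I would use the lower-cone convention $CA=(A\times I)/\!\sim$, with $(a,0)\sim\ast$ and $(\ast,t)\sim\ast$, and write $\varphi$ also for its restriction to $CA\subseteq Y\cup CA$. Set
\[
   g(a,t)=\big((a,t),\,\omega_{a,t}\big),\qquad\omega_{a,t}(s)=\varphi(a,st),
\]
in exact analogy with the reparametrisation maps $\xi$ and $H$ used earlier in the section. Then $\omega_{a,t}(0)=\varphi(\ast)=\ast$, so $\omega_{a,t}\in PZ$; $\omega_{a,t}(1)=\varphi(a,t)=\varphi\big((a,t)\big)$, so $g(a,t)\in E'$; and $\omega_{a,0}$ and $\omega_{\ast,t}$ are both the constant path at~$\ast$, so $g$ descends to a continuous map on the quotient $CA$. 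By construction the composite of $g$ with the projection $E'\to Y\cup CA$ is $(a,t)\mapsto(a,t)$, the standard cone inclusion.

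Finally I would verify the map-of-pairs condition. For $t=1$ the point $(a,1)\in CA$ is identified in $Y\cup CA$ with $f(a)\in Y$, so $g(a,1)=\big(f(a),\,\omega_{a,1}\big)$ has first coordinate in $Y$ and hence lies in $E$; thus $g(A)\subseteq E$. The composite $(CA,A)\to(E',E)\to(Y\cup CA,Y)$ is then the cone inclusion on $CA$ and $f$ on $A$, which is exactly the standard inclusion of pairs.

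There is no genuine obstacle here; the lemma is a bookkeeping statement. The only points that need care are that $\varphi$ is a pointed map --- used to see that $g$ descends to the quotient cone --- and that one correctly recalls from~\eqref{DLiterated} that $E$ sits inside $E'$ as the preimage of $Y$, which is what makes the restriction of $g$ to $A$ land where it must. Alternatively one could proceed non-constructively: since $CA$ is contractible, the composite $CA\to Y\cup CA\to Z$ lifts through the fibration $ev_1\colon PZ\to Z$, and any such lift, suitably based, determines a map $CA\to E'$ over the cone inclusion. The explicit formula above is cleaner and keeps to the conventions already in use.
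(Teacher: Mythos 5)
Your proof is correct, and it takes a genuinely different route from the paper's. The paper argues abstractly: it starts from the standard inclusion, views the composite $A\times I\to CA\to Y\cup CA$ as a pointed homotopy whose time-$1$ slice (basepoint map) obviously lifts to $E'$, invokes the homotopy extension property to lift the whole homotopy, observes that pointedness forces the lift to factor through $CA$, and then uses the universal property of the pullback defining $E$ to produce the map $A\to E$. You instead exploit the explicit path-space model $E'=\{(w,\omega)\in(Y\cup CA)\times PZ\mid\omega(1)=\varphi(w)\}$ inherited from Section~\ref{sec:principal} and simply write down the lift $g(a,t)=((a,t),\omega_{a,t})$ with $\omega_{a,t}(s)=\varphi(a,st)$, then verify by inspection that it is well defined on the quotient $CA$, lies over the cone inclusion, and carries $A=A\times\{1\}$ into the preimage $E$ of $Y$. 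Your construction is precisely the analogue of the maps $\xi$ and $H$ in Section~\ref{sec:principal}, so it fits the conventions of the paper cleanly, and it avoids both the homotopy extension property and the pullback universal property at the cost of being tied to the specific path-space model (which the paper does fix, so nothing is lost here). One stylistic bonus of your version is that it sidesteps the small orientation wobble in the paper's proof, which briefly reads as if the cone were an upper cone ($t=1$ at the basepoint) even though the paper's stated convention is the lower cone ($(y,0)\sim\ast$); your formula is consistent with the lower cone throughout.
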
 

\begin{proof} 
Start with the standard inclusion 
\(\namedright{(CA,A)}{}{(Y\cup CA,Y)}\). 
Consider the composite\linebreak  
\(h\colon\nameddright{A\times I}{}{CA}{}{Y\cup CA}\), 
where the left map is the quotient map to $CA=A\wedge I$. The map $h$ 
is a pointed homotopy which at $t=0$ sends $A$ to the base of the cone in $Y\cup CA$ 
and at $t=1$ sends $A$ to the basepoint. Thus $h_{1}$ lifts to $E'$, so the 
homotopy extension property implies that $h$ lifts to a map 
\(\overline{h}\colon\namedright{A\times I}{}{E'}\). 
As this occurs in the pointed category, $\overline{h}$ factors as a composite 
\(\nameddright{A\times I}{}{CA}{\overline{h}'}{E'}\).  
The pullback property of $E$ then implies that there is a map 
\(g\colon\namedright{A}{}{E}\) 
such that $p\circ g$ is the identity on $A$ and $e\circ g$ is $\overline{h'}$. 
Thus $g$ and $e\circ g$ give a map of pairs 
\(\namedright{(CA,A)}{}{(E',E})\) 
with the property that the composite 
\(\nameddright{(CA,A)}{}{(E',E)}{}{(Y\cup CA,Y)}\) 
is the standard inclusion. 
\end{proof} 

Since the homotopy fibration  
\(\nameddright{\Omega Z}{}{E}{}{Y}\) 
is principal, there is a homotopy action 
\(\namedright{\Omega Z\times E}{a}{E}\). 
Let $\vartheta$ be the composite 
\[\vartheta\colon\nameddright{\Omega Z\times A}{1\times g} 
      {\Omega Z\times E}{a}{E}.\] 
The following was established in the first half of the proof 
of~\cite[Theorem 2.2]{GT}. 

\begin{theorem} 
   \label{GTpo} 
   Using the map of pairs in Lemma~\ref{Alift}, there is a homotopy pushout 
   \[\diagram 
           \Omega Z\times A\rto^-{\vartheta}\dto^{\pi_{1}} & E\dto \\ 
           \Omega Z\rto & E'. 
     \enddiagram\]  
\end{theorem}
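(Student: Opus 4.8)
The plan is to exhibit $E'$ as a double mapping cylinder (homotopy pushout) by unwinding the iterated pullback~(\ref{DLiterated}) and feeding it through Mather's Cube Lemma, exactly as in~\cite[Theorem 2.2]{GT}. First I would start from the defining homotopy pushout of the cofibration, namely the square with $A\to Y$ and $A\to CA$ on top having homotopy pushout $Y\cup CA$; since $CA$ is contractible this is the standard way of writing $Y\cup CA$ as a homotopy pushout of $\namedright{Y}{}{Y\cup CA}$ against a point. I would then pull this entire pushout square back along the principal fibration $\namedright{E'}{}{Y\cup CA}$. By the Cube Lemma, pulling a homotopy pushout back along a (fixed target) map yields again a homotopy pushout on the level of total spaces, provided the vertical maps in the cube are the induced fibrations; concretely, the back face of the cube is the pushout square defining $Y\cup CA$, the front face will be the desired pushout square with apex $E'$, and the four connecting maps are the pullback projections. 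The pullbacks of $Y$, $A$, and $CA$ along $E'\to Y\cup CA$ are, respectively, $E$, $Q$, and the pullback of $E'$ over the contractible cone $CA$.

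Next I would identify the three corners. The pullback over $Y$ is $E$ by definition of~(\ref{DLiterated}). The pullback over $CA$: since $CA$ is contractible and $\namedright{Y\cup CA}{\varphi}{Z}$ restricted to $CA$ is null (it factors through two consecutive maps of a cofibration), the restricted principal fibration over $CA$ is trivial, so this pullback is homotopy equivalent to $\Omega Z\times CA\simeq \Omega Z$, with the fibre inclusion $\namedright{\Omega Z}{}{E'}$ as the connecting map into $E'$. The pullback over $A$ is $Q$, and by the same triviality argument $Q\simeq\Omega Z\times A$; here is where Lemma~\ref{Alift} enters — the map of pairs $\namedright{(CA,A)}{}{(E',E)}$ constructed there provides precisely the section $\namedright{A}{g}{E}$, and hence a consistent trivialization, so that the composite $\namedright{Q\simeq\Omega Z\times A}{}{E}$ along one edge of the cube becomes $\vartheta = a\circ(1\times g)$, while the composite $\namedright{Q\simeq\Omega Z\times A}{}{\Omega Z}$ along the other edge becomes the projection $\pi_1$. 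Matching these identifications against the connecting maps of the cube, the front face is exactly the square in the statement.

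The main obstacle — and the only subtle point — is the compatibility of the trivializations: we need that the homotopy equivalences $Q\simeq\Omega Z\times A$ and (pullback over $CA$)$\simeq\Omega Z$ can be chosen so that the cube's connecting maps $\namedright{Q}{}{E}$ and $\namedright{Q}{}{\Omega Z}$ are identified with $\vartheta$ and $\pi_1$ simultaneously, rather than merely up to separate homotopies. This is precisely what the construction of $g$ in Lemma~\ref{Alift} arranges: $p\circ g = \mathrm{id}_A$ forces the projection edge to be $\pi_1$, while $e\circ g = \overline{h}'$ feeds the cone coordinate coherently into $E'$, so that the action $a$ of the principal fibration on $E$ supplies the other edge as $a\circ(1\times g)=\vartheta$. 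Everything else is a formal application of the Cube Lemma together with the naturality of turning maps into fibrations; since the referenced first half of the proof of~\cite[Theorem 2.2]{GT} carries this out in detail, I would simply cite it, having reduced the statement to that input.
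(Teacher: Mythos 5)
Your reconstruction via Mather's Cube Lemma — pulling the pushout square defining $Y\cup CA$ back along $\namedright{E'}{}{Y\cup CA}$, identifying the corners as $E$, $Q\simeq\Omega Z\times A$, and $\Omega Z$, and using the section $g$ (together with its cone extension $\overline{h}'$) from Lemma~\ref{Alift} to make the two trivializations compatible so that the edges become $\vartheta$ and $\pi_{1}$ — is precisely the argument in the first half of the proof of~\cite[Theorem~2.2]{GT}, which the paper cites without reproving. Your identification of trivialization-compatibility as the one subtle point, and of Lemma~\ref{Alift} as exactly what arranges it, is correct and is the content that the citation is carrying.
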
 
\vspace{-0.9cm}~$\qqed$\medskip   

The value of Theorem~\ref{GTpo} is that it allows for the homotopy 
type of $E'$ to be identified in terms of known maps. In the previous section 
we related the action $\vartheta$ to relative Whitehead products. We now 
make this explicit in the context of the pushout in Theorem~\ref{GTpo}. 

Let $h$ be the composite 
\(\nameddright{Y}{}{Y\cup CA}{\varphi}{Z}\). 
Consider the homotopy fibration sequence 
\[\namedright{\Omega Y}{\Omega h}{\Omega Z}\longrightarrow\nameddright{E}{p}  
      {Y}{h}{Z}.\]  
Suppose that the map 
\(\namedright{\Omega Z}{}{E}\) 
is null homotopic. Then $\Omega h$ has a right homotopy inverse 
\(s\colon\namedright{\Omega Z}{}{\Omega Y}\). 
Let 
\(g\colon\namedright{A}{}{E}\) 
be the restriction of $\overline{g}$ to $A$. Consider the diagram 
\[\diagram 
       \Omega Z\times A\rto^-{1\times g}\dto^{q} 
            & \Omega Z\times E\rto^-{s\times 1}\dto^{q} 
            & \Omega Y\times E\rto^-{\Omega h\times 1}\dto^{q} 
            & \Omega Z\times E\rto^-{a} & E \\ 
        \Omega Z\ltimes A\rto^-{1\ltimes g} & \Omega Z\ltimes E\rto^-{s\ltimes 1} 
             & \Omega Y\ltimes E\urrto_-{\overline{\Gamma}}  & 
  \enddiagram\] 
The left and middle squares commute by naturality and the right triangle  
homotopy commutes by Proposition~\ref{Gammabartheta}. 
Since $\Omega h\circ s$ is homotopic to the identity 
map on $\Omega Z$, the composite along the top row is homotopic 
to $a\circ(1\times g)$, which by definition, is $\vartheta$. Thus we 
obtain a homotopy commutative diagram 
\begin{equation} 
   \label{varthetafact} 
   \diagram 
         \Omega Z\times A\rrto^-{\vartheta}\dto^{q} & & E\ddouble \\ 
         \Omega Z\ltimes A\rto^-{s\ltimes g} 
              & \Omega Y\ltimes E\rto^-{\overline{\Gamma}} & E.    
     \enddiagram 
\end{equation} 
The factorization of $\vartheta$ through the half-smash $\Omega Z\ltimes A$ 
implies that, in the homotopy pushout in Theorem~\ref{GTpo}, the space $\Omega Z$ 
may be pinched out, proving the following. 

\begin{theorem} 
   \label{GTcofib} 
   Given the same hypotheses as in Theorem~\ref{GTpo}, suppose in addition 
   that the map 
   \(\namedright{\Omega Y}{\Omega h}{\Omega Z}\) 
   has a right homotopy inverse 
   \(s\colon\namedright{\Omega Z}{}{\Omega Y}\). 
   Then there is a homotopy cofibration 
   \[\lllnameddright{\Omega Z\ltimes A}{\overline{\Gamma}\circ(s\ltimes g)}{E}{}{E'}.\] 
\end{theorem}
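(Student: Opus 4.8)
The plan is to make precise the assertion preceding the statement: that the factorization of $\vartheta$ through $\Omega Z\ltimes A$ lets the factor $\Omega Z$ be collapsed in the homotopy pushout of Theorem~\ref{GTpo}. Set $\beta=\overline{\Gamma}\circ(s\ltimes g)\colon\Omega Z\ltimes A\to E$ and let $q\colon\Omega Z\times A\to\Omega Z\ltimes A$ be the quotient map, so diagram~(\ref{varthetafact}) reads $\vartheta\simeq\beta\circ q$. The structural point is that $\Omega Z\ltimes A=(\Omega Z\times A)/(\Omega Z\times\ast)$ and the inclusion $\Omega Z\times\ast\hookrightarrow\Omega Z\times A$ is a cofibration; hence $q$ represents the homotopy cofibre map of the inclusion $j\colon\Omega Z\hookrightarrow\Omega Z\times A$, $j(\omega)=(\omega,\ast)$, and in particular $\pi_{1}\circ j=\mathrm{id}_{\Omega Z}$.

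First I would introduce the auxiliary space $D$ defined as the homotopy pushout of $\pi_{1}$ and $q$, and prove $D\simeq\ast$. Stack the homotopy cofibre square of $j$ (with corners $\Omega Z$, $\ast$, $\Omega Z\times A$, $\Omega Z\ltimes A$) on top of the defining square of $D$ along their common edge $q\colon\Omega Z\times A\to\Omega Z\ltimes A$. Both squares are homotopy pushouts, so by the pasting law the outer rectangle is a homotopy pushout; but its left-hand edge is the composite $\pi_{1}\circ j=\mathrm{id}_{\Omega Z}$, so the rectangle exhibits $D$ as the homotopy pushout of $\ast\leftarrow\Omega Z\xrightarrow{\ \mathrm{id}\ }\Omega Z$, and pushing out along an equivalence gives $D\simeq\ast$.

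Then I would apply the pasting law in the orthogonal direction. Since $\vartheta\simeq\beta\circ q$, Theorem~\ref{GTpo} provides a homotopy pushout rectangle whose top row is $\Omega Z\times A\xrightarrow{q}\Omega Z\ltimes A\xrightarrow{\beta}E$, whose bottom row is the canonical map $\Omega Z\to D$ followed by the induced map $D\to E'$, whose left vertical is $\pi_{1}$, whose middle vertical is the structure map to $D$, and whose right vertical is the map $E\to E'$. Its left-hand square is a homotopy pushout by the definition of $D$, so by the pasting law (left square and outer rectangle homotopy pushouts implies right square a homotopy pushout) the right-hand square is a homotopy pushout; because $D\simeq\ast$ this right-hand square is precisely the statement that $\Omega Z\ltimes A\xrightarrow{\beta}E\to E'$ is a homotopy cofibration. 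No serious obstacle arises---the argument is pure diagram chasing with homotopy pushouts once~(\ref{varthetafact}) is in hand---so the only thing to watch is the bookkeeping: tracking which squares are homotopy pushouts and invoking the pasting law in the correct direction each time.
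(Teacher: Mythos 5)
Your argument is correct and takes essentially the same approach the paper intends: the paper asserts without proof that the factorization~(\ref{varthetafact}) of $\vartheta$ through the quotient $q$ allows $\Omega Z$ to be pinched out of the homotopy pushout of Theorem~\ref{GTpo}, and your two applications of the pasting law (first vertically to identify the auxiliary pushout $D$ of $\pi_{1}$ and $q$ with a point, then horizontally to split the homotopy pushout rectangle and isolate the right-hand square) is the standard and correct way to make that one-line assertion rigorous.
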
 
\vspace{-1cm}$\qqed$\bigskip 

Ideally, if the homotopy types of $\Omega Z$, $A$ and $E$ are known, and 
the homotopy class of $\overline{\Gamma}\circ(s\ltimes g)$ can 
be identified, then the homotopy type of $E'$ could also be identified. 
Since $\Omega h$ has a right homotopy inverse, the homotopy class of 
$\overline{\Gamma}\circ(s\ltimes g)$ is determined by the homotopy class 
of its composition with 
\(\namedright{E}{p}{Y}\). 
In Theorem~\ref{GTcofibWh} the homotopy class of $p\circ\overline{\Gamma}\circ(s\ltimes g)$ 
is described more precisely, provided that $A$ is a suspension. Let $\gamma$ be the 
composite 
\[\gamma\colon\nameddright{\Sigma\Omega Z}{\Sigma s}{\Sigma\Omega Y}{ev}{Y}.\] 

\begin{theorem} 
   \label{GTcofibWh} 
   Given the same hypotheses as Theorem~\ref{GTcofib}. Suppose in addition 
   that $A\simeq\Sigma\overline{A}$. Then there is a homotopy commutative diagram 
   \[\diagram 
         \Omega Z\ltimes A\rto^-{s\ltimes g}\dto^{e^{-1}} 
            & \Omega Y\ltimes E\rto^-{\overline{\Gamma}} & E\dto^{p} \\ 
         (\Sigma\Omega Z\wedge\overline{A})\vee A\rrto^-{[\gamma,f]+f} 
            & & Y   
     \enddiagram\] 
   where $e$ is the homotopy equivalence from Lemma~\ref{halfsmashsusp}. 
\end{theorem}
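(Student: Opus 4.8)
The plan is to reduce the statement to Corollary~\ref{Whid} by recognizing $\overline{\Gamma}\circ(s\ltimes g)$ as (up to the relevant homotopy equivalences) the map $\overline{\Gamma}\circ(\Omega f'\ltimes g')$ that appears there, for an appropriate choice of $f'$ and $g'$. Concretely, I would first apply Corollary~\ref{Whid} to the principal fibration $\nameddright{\Omega Z}{}{E}{p}{Y}$ with the maps $\namedright{\Sigma\overline{A}}{f}{Y}$ (using $A\simeq\Sigma\overline{A}$) and $\namedright{A}{g}{E}$. Since $A$ is already a suspension, there is a subtlety: Corollary~\ref{Whid} is phrased for a half-smash $\Omega\Sigma X\ltimes\Sigma Y$, whereas here the left factor is $\Omega Z$, not $\Omega\Sigma\overline{A}$. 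This is exactly what the map $s\colon\namedright{\Omega Z}{}{\Omega Y}$ is for: post-composing with $s$ replaces the generic loop factor by $\Omega Z$.

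The key steps, in order, are as follows. First, unwind the composite $\overline{\Gamma}\circ(s\ltimes g)$ using the homotopy commutativity established in the diagram just before Corollary~\ref{Whid} (the one obtained by incorporating Lemma~\ref{halfsmashsusp} into Lemma~\ref{reldefhtpy}): after composing with $\namedright{E}{p}{Y}$, the map $\overline{\Gamma}\circ(\Omega f\ltimes g)$ becomes, under $e^{-1}$, the Whitehead-product expression $[f\circ ev,p\circ g]+p\circ g$ on $(\Sigma\Omega\Sigma\overline{A}\wedge\overline{A})\vee A$. Here one must be slightly careful: $\overline{\Gamma}\circ(s\ltimes g)$ is not literally $\overline{\Gamma}\circ(\Omega f\ltimes g)$, but the factorization~(\ref{varthetafact}) together with the identity $\Omega h\circ s\simeq 1$ shows that $s$ plays the role of the loop map, so the relevant diagram is the naturality square of Corollary~\ref{Whid} applied along the map $s$ (viewed as a map of the loop factors). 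Second, identify the pieces of the resulting composite: by Lemma~\ref{halfsmashsusp} the relevant half-smash $\Omega Z\ltimes A = \Omega Z\ltimes\Sigma\overline{A}$ is homotopy equivalent via $e$ to $(\Sigma\Omega Z\wedge\overline{A})\vee A$, and the two wedge summands map to $Y$ by $[\,\cdot\,,f]$-type terms and by $f$ respectively. Third, identify the map $\namedright{\Sigma\Omega Z\wedge\overline{A}}{}{Y}$ coming from the first summand: it is $[\gamma',f]$ where $\gamma'$ is the composite $\nameddright{\Sigma\Omega Z}{\Sigma s}{\Sigma\Omega Y}{ev}{Y}$ — but this is precisely $\gamma$. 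The term $p\circ g$ on the $A$-summand is $p\circ g = f$, since $p\circ g$ is the identity on $A$ followed by $f$, i.e., the bottom map of the pullback~(\ref{DLiterated}); more directly, the map of pairs from Lemma~\ref{Alift} makes $e\circ g$ compatible with the inclusion $CA\hookrightarrow Y\cup CA$, so $p\circ g\simeq f$. Assembling, $p\circ\overline{\Gamma}\circ(s\ltimes g)\circ e \simeq [\gamma,f]+f$.

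The main obstacle I expect is the bookkeeping around the substitution of $\Omega Z$ for the ``free'' loop factor $\Omega\Sigma\overline{A}$: Corollary~\ref{Whid} and Lemma~\ref{halfsmashsusp} are stated with $\Omega\Sigma\overline{A}$, and one must invoke their naturality (Remark~\ref{natremark}) along the map of loop factors induced by $s$ and the suspension map $\eta\colon\namedright{\overline{A}}{}{\Omega\Sigma\overline{A}}$ to transport everything to $\Omega Z$. The content is that $\gamma = ev\circ\Sigma s$ is exactly the composite obtained from $f\circ ev$ (on $\Sigma\Omega Y$) by precomposing with $\Sigma s$, and that the homotopy equivalence $e$ for $\Omega Z\ltimes\Sigma\overline{A}$ is the one named in Lemma~\ref{halfsmashsusp} with $Q$ replaced by $Y$ and $R$ by $\overline{A}$, pulled back along $s$. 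Once the naturality is set up correctly, the Whitehead-product identities ($[f\circ ev,p\circ g]\circ(\Sigma s\wedge 1)\simeq[\gamma,f]$ by naturality of the Whitehead product, and $p\circ g\simeq f$) are routine, and the diagram in the statement follows.
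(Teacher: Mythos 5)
Your overall strategy---deduce the theorem from Corollary~\ref{Whid} and Lemma~\ref{halfsmashsusp} by chasing naturality---is the right one, and you have correctly identified the final ingredients ($\gamma = ev\circ\Sigma s$ and $p\circ g\simeq f$, the latter coming from Lemma~\ref{Alift}). But there is a gap at precisely the step you flag as the ``main obstacle.''

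You propose to invoke naturality of Corollary~\ref{Whid} ``along the map of loop factors induced by $s$,'' together with the suspension map $\eta\colon\overline{A}\to\Omega\Sigma\overline{A}$. The naturality recorded in Remark~\ref{natremark1}/Remark~\ref{natremark} is for maps of principal fibrations and for \emph{suspension} maps $\Sigma a\colon\Sigma X\to\Sigma X'$ on the wedge summands. The map $s\colon\Omega Z\to\Omega Y$ is merely a section of $\Omega h$; there is no reason for it to be $\Omega$ of a map $Z\to Y$, so $s\ltimes 1$ is not $\Omega a\ltimes 1$ for any suspension map $\Sigma a$, and the naturality you want to cite does not apply to it. Moreover the $\eta$ you mention lives on $\overline{A}$, which is the wrong factor: the mismatch you need to repair is entirely on the $\Omega Z$-side of the half-smash $\Omega Z\ltimes A$, and doing something to the $A$-factor does not touch it. Your remark that $e$ for $\Omega Z\ltimes A$ is the one for $\Omega Y\ltimes A$ ``pulled back along $s$'' is also off: Lemma~\ref{halfsmashsusp} produces $e$ directly with $Q=Z$, no pulling back required; what needs $s$ is the compatibility between $e$ and $\overline{\Gamma}\circ(s\ltimes g)$, and this is exactly where the gap sits.

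The device that closes the gap---and the one the paper uses---is to apply $\eta$ to the \emph{loop} factor, $\eta\colon\Omega Z\to\Omega\Sigma\Omega Z$ and $\eta\colon\Omega Y\to\Omega\Sigma\Omega Y$. Naturality of $\eta$ and the identity $\Omega ev\circ\eta\simeq 1$ give $s\simeq\Omega ev\circ\Omega\Sigma s\circ\eta$, so $s\ltimes g$ factors through $\Omega\Sigma s\ltimes 1$ and $\Omega ev\ltimes g$. Now $\Sigma s\colon\Sigma\Omega Z\to\Sigma\Omega Y$ and $ev\colon\Sigma\Omega Y\to Y$ \emph{are} maps of the sort Remark~\ref{natremark1} covers, and Proposition~\ref{lambdaid} (with $X=\Omega Z$, $\Sigma Y$-slot filled by $A=\Sigma\overline{A}$) supplies the left-hand triangle. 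Assembling these into a three-row diagram, with the middle row carrying $\Omega\Sigma s\ltimes 1$ and $\Omega ev\ltimes g$ and the bottom row $\Sigma s\vee 1$ and $ev\vee g$ landing in $Y\vee E\to Y$, delivers $p\circ\overline{\Gamma}\circ(s\ltimes g)\circ e\simeq[\gamma,f]+f$. Until the $\eta$-on-$\Omega Z$ step is in place, the naturality appeal in your plan is not justified.
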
 

\begin{proof} 
Generically, let 
\(e_{1}\colon\namedright{\Sigma\Omega Q\wedge R}{}{\Omega Q\ltimes\Sigma R}\) 
be the restriction of the homotopy equivalence 
\(\namedright{(\Sigma\Omega Q\wedge R)\vee\Sigma R}{e} 
      {\Omega Q\ltimes\Sigma R}\) 
in Lemma~\ref{halfsmashsusp} to the first wedge summand. Consider the diagram 
\[\diagram 
     \Omega Z\wedge A\rdouble\ddto^{\simeq} 
          & \Omega Z\wedge A\rto^-{e_{1}}\dto^{\eta\wedge 1} 
          & \Omega Z\ltimes A\rto^-{s\ltimes 1}\dto^{\eta\ltimes 1} 
          & \Omega Y\ltimes A\drto^{1\ltimes g}\dto^{\eta\ltimes 1} & & \\ 
      & \Omega\Sigma\Omega Z\wedge A\rto^-{e_{1}} 
          & \Omega\Sigma\Omega Z\ltimes A\rto^-{\Omega\Sigma s\ltimes 1}\dto 
          & \Omega\Sigma\Omega Y\ltimes A\rto^-{\Omega ev\ltimes g}\dto 
          & \Omega Y\ltimes E\rto^-{\overline{\Gamma}}\dto & E\dto^{p} \\ 
      \Sigma\Omega Z\wedge\overline{A}\rrto^-{W} 
          & & \Sigma\Omega Z\vee A\rto^-{\Sigma s\vee 1} 
          & \Sigma\Omega Y\vee A\rto^-{ev\vee g} 
          & Y\vee E\rto^-{1\vee p} & Y. 
  \enddiagram\] 
The $L$-shaped part of the diagram on the left homotopy commutes by 
Proposition~\ref{lambdaid}, and the square in its upper right corner homotopy commutes 
by the naturality of $e_{1}$. The upper middle square homotopy commutes 
by the naturality of $\eta$ and the upper triangle homotopy commutes since $\eta$ 
is a right homotopy inverse of $\Omega ev$. The three remaining squares on 
the bottom homotopy commute by the construction of relative Whitehead products. 
Thus the entire diagram homotopy commutes. Observe that the upper direction around 
the diagram is the restriction 
of $\overline{\Gamma}\circ(s\ltimes g)\circ e$ to $\Omega Z\wedge A$, 
while the naturality of the Whitehead product implies that the lower direction 
around the diagram is homotopic to $[ev\circ\Sigma s,p\circ g]=[\gamma,f]$. 
Thus the diagram in the statement of the theorem homotopy commutes 
when restricted to $\Sigma\Omega Z\wedge\overline{A}\simeq\Omega Z\wedge A$. 
On the other hand, the restriction of the diagram in the statement of the 
theorem to $A$ clearly homotopy commutes since the restriction of 
$\overline{\Gamma}(s\ltimes g)$ to $A$ is $g$. This completes the proof. 
\end{proof} 

A special case of Theorems~\ref{GTcofib} and~\ref{GTcofibWh} that will be 
used in the next section is given by taking  
$Z=Y\cup CA$ and  
\(\namedright{Y}{\varphi}{Z}\) 
as the identity map. This gives the following diagram of spaces and maps that 
collects the data going into Theorem~\ref{GTcofib}: 
\begin{equation} 
  \label{data} 
  \diagram 
      & E\rto\dto^{p} & E'\dto \\ 
      A\rto^-{f} & Y\rto\dto^{h} & Y\cup CA\dto^{=} \\ 
      & Y\cup CA\rdouble & Y\cup CA. 
  \enddiagram 
\end{equation} 
Here, the middle row is a cofibration, the two columns are homotopy 
fibrations, and the upper square is a homotopy pullback. The map $h$ in 
general is the composite 
\(\nameddright{Y}{}{Y\cup CA}{\varphi}{Z}\), 
so in this case it is simply the inclusion 
\(\namedright{Y}{}{Y\cup CA}\). 
Thus the space $E$ is the homotopy fibre of this inclusion and the 
space $E'$ is contractible. From Theorems~\ref{GTcofib} and~\ref{GTcofibWh} 
we obtain the following. 

\begin{proposition} 
   \label{specialGTcofib} 
   Let 
   \(\nameddright{A}{f}{Y}{h}{Y\cup CA}\) 
   be a cofibration and consider the homotopy fibration 
   \(\nameddright{E}{p}{Y}{h}{Y\cup CA}\).  
   Suppose that $\Omega h$ has a right homotopy inverse 
   \(s\colon\namedright{\Omega (Y\cup CA)}{}{\Omega Y}\).  
   Then there is a homotopy equivalence  
   \[\lllnamedright{\Omega (Y\cup CA)\ltimes A}{\overline{\Gamma}\circ(s\ltimes g)}{E}.\] 
   Further, if $A\simeq\Sigma\overline{A}$ then there is a homotopy commutative diagram 
   \[\diagram 
         \Omega (Y\cup CA)\ltimes A\rrto^-{\overline{\Gamma}\circ(s\ltimes g)}_-{\simeq} 
                   \dto^{e^{-1}}_{\simeq}  
            & & E\dto^{p} \\ 
         (\Sigma\Omega (Y\cup CA)\wedge\overline{A})\vee A\rrto^-{[\gamma,f]+f} 
            & & Y.   
     \enddiagram\] 
   Consequently, there is a homotopy fibration 
   \[\hspace{3.5cm}\llnameddright{\Sigma\Omega (Y\cup CA)\wedge\overline{A})\vee A}{[\gamma,f]+f} 
           {Y}{h}{Y\cup CA}.\hspace{3.5cm}\Box\] 
\end{proposition}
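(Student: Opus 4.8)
The plan is to specialize the two preceding theorems to the data collected in~(\ref{data}) and then read off the fibration. First I would observe that in this setup $Z = Y\cup CA$ with $\varphi = \mathrm{id}$, so the map $h\colon Y\to Y\cup CA$ of Theorem~\ref{GTcofib} is literally the cofibration inclusion; consequently the homotopy fibration $\nameddright{\Omega Z}{}{E'}{}{Y\cup CA}$ becomes $\nameddright{\Omega(Y\cup CA)}{}{E'}{}{Y\cup CA}$, whose total space $E'$ is the homotopy fibre of the identity map and hence contractible, while $E$ is identified with the homotopy fibre of the inclusion $h\colon Y\to Y\cup CA$. All the hypotheses of Theorem~\ref{GTpo} are then met by construction, and the extra hypothesis of Theorem~\ref{GTcofib}—that $\Omega h$ has a right homotopy inverse $s$—is exactly what we are assuming.

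Next I would invoke Theorem~\ref{GTcofib} to obtain the homotopy cofibration $\lllnameddright{\Omega Z\ltimes A}{\overline{\Gamma}\circ(s\ltimes g)}{E}{}{E'}$; since $E'\simeq\ast$, this cofibration collapses to show that $\overline{\Gamma}\circ(s\ltimes g)\colon\Omega(Y\cup CA)\ltimes A\to E$ is itself a homotopy equivalence. This gives the first displayed equivalence of the proposition. Under the additional hypothesis $A\simeq\Sigma\overline{A}$, Theorem~\ref{GTcofibWh} supplies the homotopy commutative square relating $\overline{\Gamma}\circ(s\ltimes g)$ after pre-composition with $e^{-1}$ to the map $[\gamma,f]+f\colon(\Sigma\Omega(Y\cup CA)\wedge\overline{A})\vee A\to Y$, where $\gamma = ev\circ\Sigma s$ and $e$ is the equivalence of Lemma~\ref{halfsmashsusp}; one simply substitutes $Z = Y\cup CA$ into the statement of that theorem, noting that its $p\colon E\to Y$ is precisely our $p$. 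This yields the second displayed diagram verbatim.

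For the final conclusion, I would combine the two: the composite $[\gamma,f]+f$ fits into the diagram as $p$ conjugated by the two vertical homotopy equivalences $e^{-1}$ and (the identity on) the equivalence $\overline{\Gamma}\circ(s\ltimes g)$, so its homotopy fibre agrees with that of $p\colon E\to Y$. But $p$ is, by the pullback in~(\ref{data}), the pullback of $h\colon Y\to Y\cup CA$ along itself—more precisely $E$ is the homotopy fibre of $h$—so $p$ and $h$ sit in the same homotopy fibration sequence, and the homotopy fibre of $p$ over $Y$, continued one step, is $Y\cup CA$. Concretely, the fibration sequence $\nameddright{E}{p}{Y}{h}{Y\cup CA}$ lets us replace $E$ by the equivalent source $(\Sigma\Omega(Y\cup CA)\wedge\overline{A})\vee A$ and the map by $[\gamma,f]+f$, giving the homotopy fibration $\llnameddright{(\Sigma\Omega(Y\cup CA)\wedge\overline{A})\vee A}{[\gamma,f]+f}{Y}{h}{Y\cup CA}$ as claimed.

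The only genuine point requiring care—and the step I expect to be the main obstacle to write cleanly—is verifying that the identification of $E$ as the homotopy fibre of $h$ is compatible with the map $p$ appearing in Theorem~\ref{GTcofibWh}, i.e.\ that the diagram's vertical equivalences really do conjugate $[\gamma,f]+f$ into the fibration connecting map, rather than merely into some self-map of $Y$ homotopic to $p$ up to an automorphism of the fibre. This is handled by the naturality clauses accumulated through Section~\ref{sec:principal} (Remarks~\ref{natremark1} and~\ref{natremark}) together with the explicit pullback description in~(\ref{data}), which pins down $p$ as the fibre-inclusion composite and not just as an abstract fibration; once that bookkeeping is in place the conclusion is immediate.
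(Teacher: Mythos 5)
Your proposal is correct and takes essentially the same approach as the paper, which leaves the proof implicit: one specializes Theorems~\ref{GTcofib} and~\ref{GTcofibWh} to the data in~(\ref{data}) with $Z = Y\cup CA$ and $\varphi = \mathrm{id}$, notes that $E'$ is contractible so the cofibration from Theorem~\ref{GTcofib} exhibits $\overline{\Gamma}\circ(s\ltimes g)$ as an equivalence, and transports the fibration $\nameddright{E}{p}{Y}{h}{Y\cup CA}$ along the equivalences. Your closing concern is unnecessary, since the right vertical arrow in the diagram of Theorem~\ref{GTcofibWh} is already literally $p$, so no extra naturality bookkeeping is needed to identify it with the fibration inclusion.
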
 

That is, Proposition~\ref{specialGTcofib} identifies the homotopy type of $E$ 
and, if $A$ is a suspension, identifies the map 
\(\namedright{E}{p}{Y}\) 
in terms of $f$ and a Whitehead product.

\section{The based loops on highly connected Poincar\'{e} Duality complexes I} 
\label{sec:highI} 
 
In this section Proposition~\ref{specialGTcofib} is used to analyze the 
homotopy theory of certain cell attachments, as described in Theorem~\ref{PDthm}. 
This is then applied to identify the homotopy types of the based loops on 
certain Poincar\'{e} Duality complexes. By the Hilton-Milnor Theorem the inclusion 
\mbox{\(\namedright{S^{m}\vee S^{n}}{i}{S^{m}\times S^{n}}\)} 
has a right homotopy inverse after looping; that is, there is a map 
\(t\colon\namedright{\Omega S^{m}\times\Omega S^{n}}{}{\Omega(S^{m}\vee S^{n})}\) 
such that $\Omega i\circ t$ is homotopic to the identity map. 

\begin{theorem} 
   \label{PDthm} 
   Suppose that there is a cofibration 
   \(\nameddright{S^{m+n-1}}{}{Y}{}{Y'=Y\cup e^{m+n}}\) 
   where $m,n\geq 2$. Suppose also that: 
   \begin{itemize} 
      \item[(i)] there is a homotopy equivalence $Y\simeq S^{m}\vee S^{n}\vee\Sigma X$; 
      \item[(ii)] the composite of inclusions 
                    \(f\colon\Sigma X\hookrightarrow Y\hookrightarrow Y'\) 
                    has homotopy cofibre $D$ with the property that 
                    $\cohlgy{D}\cong\cohlgy{S^{m}\times S^{n}}$.   
   \end{itemize} 
   Let 
   \(q\colon\namedright{Y'}{}{D}\) 
   be the map to the cofibre and let $s$ be the composite 
   \(\nameddright{\Omega S^{m}\times\Omega S^{n}}{t} 
         {\Omega(S^{m}\vee S^{n})}{}{\Omega Y'}\). 
   Then: 
   \begin{itemize} 
      \item[(a)] the composite 
                     \(\nameddright{\Omega S^{m}\times\Omega S^{n}}{s}{\Omega Y'}{\Omega q}{\Omega D}\) 
                     is a homotopy equivalence; 
      \item[(b)] there is a homotopy fibration 
                     \[\llnameddright{(\Sigma(\Omega S^{m}\times\Omega S^{n})\wedge X)\vee\Sigma X} 
                         {[\gamma,f]+f}{Y'}{q}{D}\]  
                     where $\gamma=ev\circ\Sigma s$; 
      \item[(c)] there is a homotopy equivalence 
                     \[\Omega Y'\simeq\Omega S^{m}\times\Omega S^{n}\times 
                         \Omega((\Omega S^{m}\times\Omega S^{n})\ltimes\Sigma X).\] 
   \end{itemize} 
\end{theorem}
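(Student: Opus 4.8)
The plan is to apply Proposition~\ref{specialGTcofib} to the cofibration $\nameddright{S^{m+n-1}}{}{Y}{}{Y'}$, once the hypotheses of that proposition are verified, and then to feed the resulting data into the decomposition $Q\simeq\Omega Z\times A$ from the start of Section~\ref{sec:relcube}. First I would set up the relevant spaces: let $E$ denote the homotopy fibre of the inclusion $\namedright{Y}{h}{Y'}$, so that we are in the situation of diagram~(\ref{data}) with $A=\Sigma X$ (using $X$ rather than $S^{m+n-1}$ as the attached cone; strictly we are attaching $e^{m+n}$, but the hypotheses are phrased so that the map $f$ whose cofibre matters is the inclusion of $\Sigma X$, and $\Sigma X$ is manifestly a suspension so the "$A\simeq\Sigma\overline A$" clause applies with $\overline A=X$). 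The key point to establish is that $\Omega h\colon\namedright{\Omega Y}{}{\Omega Y'}$ has a right homotopy inverse. For this I would use hypothesis~(i) together with the given map $s$: under $Y\simeq S^m\vee S^n\vee\Sigma X$, the composite $\nameddright{\Omega S^m\times\Omega S^n}{t}{\Omega(S^m\vee S^n)}{}{\Omega Y}$ and then $\Omega h$ gives exactly the map called $s$ in the statement. One checks on homology, using hypothesis~(ii) and the cofibre $D$ with $\cohlgy{D}\cong\cohlgy{S^m\times S^n}$, that $\Omega q\circ s$ is a homology isomorphism; since all spaces are simply connected, it is a homotopy equivalence, which is part~(a). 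In particular $s$ itself is injective on homology with a complementary summand, and I would argue that $s$ is a right homotopy inverse for some retraction, giving the needed splitting of $\Omega h$ — this is the step that genuinely uses that we may choose the cell structure compatibly, and I expect it to be the main obstacle: making precise that the Hilton--Milnor splitting $t$ descends through $\Omega h$ and not merely through $\Omega$ of the inclusion $\namedright{S^m\vee S^n}{}{Y}$.

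Granting the right homotopy inverse $s\colon\namedright{\Omega Y'}{}{\Omega Y}$ (or rather its restriction from $\Omega D$ via part~(a), identifying $\Omega Y'\simeq\Omega D\times(\text{fibre})$ appropriately), Proposition~\ref{specialGTcofib} applies verbatim with $Y\cup CA$ there equal to $Y'$ here. Its first conclusion gives a homotopy equivalence $\namedright{\Omega Y'\ltimes\Sigma X}{\overline\Gamma\circ(s\ltimes g)}{E}$; but by part~(a) we may replace $\Omega Y'$ by the homotopy-equivalent $\Omega S^m\times\Omega S^n$ through $\Omega q\circ s$, so $E\simeq(\Omega S^m\times\Omega S^n)\ltimes\Sigma X$. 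The "Further" clause of Proposition~\ref{specialGTcofib}, with $\overline A=X$, then gives the homotopy commutative square identifying $\namedright{E}{p}{Y}$ with $\namedright{(\Sigma\Omega Y'\wedge X)\vee\Sigma X}{[\gamma,f]+f}{Y}$; composing with the cofibration inclusion $\namedright{Y}{}{Y'}$ and using $\Sigma\Omega Y'\wedge X\simeq\Sigma X\wedge\Omega Y'\simeq\Sigma X\wedge(\Omega S^m\times\Omega S^n)$ (again via part~(a)) yields exactly the homotopy fibration of part~(b), with $\gamma=ev\circ\Sigma s$ as stated. Here I would be slightly careful about the difference between $\Sigma\Omega Y'\wedge X$ and $\Sigma X\wedge(\Omega S^m\times\Omega S^n)$: these agree up to the equivalence $\Omega q\circ s$ smashed with $X$ and a suspension-coordinate shuffle, so the Whitehead product $[\gamma,f]$ transports correctly by naturality of Whitehead products (Remark~\ref{natremark1}).

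Finally, for part~(c): the last sentence of Proposition~\ref{specialGTcofib} gives a homotopy fibration $\llnameddright{(\Sigma\Omega Y'\wedge X)\vee\Sigma X}{[\gamma,f]+f}{Y}{h}{Y'}$, i.e.\ $E$ is the homotopy fibre of $h$, and looping the fibration sequence $\nameddright{E}{p}{Y}{h}{Y'}$ gives a fibration $\nameddright{\Omega E}{\Omega p}{\Omega Y}{\Omega h}{\Omega Y'}$ in which $\Omega h$ has the right homotopy inverse $s$ built above. Therefore $\Omega Y'$ acts trivially and $\namedright{\Omega Y'\times\Omega E}{}{\Omega Y}$ is an equivalence; but we want a decomposition of $\Omega Y'$, not $\Omega Y$. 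Instead, loop the \emph{principal} fibration $\nameddright{\Omega Z}{}{E'}{}{Y'}$ of diagram~(\ref{data}) — there $E'$ is contractible and $Z=Y'$, so $\Omega Z=\Omega Y'$ — and observe that Theorem~\ref{GTpo}/Theorem~\ref{GTcofib} already exhibit $E'$ as built from $E$ and $\Omega Z\times A$; since $E'\simeq\ast$, the induced fibration $\nameddright{\Omega E}{}{\Omega Y'}{}{\Omega E'\simeq\ast}$... the cleaner route is: from part~(b), $Y'$ is the homotopy cofibre of $[\gamma,f]+f$ on $(\Omega S^m\times\Omega S^n)\ltimes$-type input, and by James' theorem / the standard fact that $\Omega$ of a cofibration $\nameddright{E}{p}{Y}{}{Y'}$ with $E\to Y$ having a section after looping splits as $\Omega Y'\simeq\Omega Y\times\Omega(\Sigma(\Omega Y'\ltimes\text{--}))$, one gets $\Omega Y'\simeq\Omega Y\times\Omega(\text{fibre})$; combining with $\Omega Y\simeq\Omega S^m\times\Omega S^n\times\Omega(\Sigma X\vee\cdots)$ is not quite it either. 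The correct and simplest argument: apply Theorem~\ref{GTcofib} in the form of Proposition~\ref{specialGTcofib} to get the cofibration $\nameddright{\Omega Y'\ltimes\Sigma X}{}{E}{}{E'\simeq\ast}$, whence $E'\simeq\ast$ reconstructs $\Omega Y'$ as the homotopy fibre in $\nameddright{\Omega(\Omega Y'\ltimes\Sigma X)}{}{\Omega E}{}{\Omega\ast}$; no — rather, use that the homotopy fibration $\nameddright{E}{}{Y}{}{Y'}$ plus a section of $\Omega p$ gives, by the standard principal-fibration splitting, $\Omega Y'\simeq\Omega Y\times\Omega E$ is false, but $\Omega Y\simeq\Omega Y'\times\Omega E$ with $E\simeq(\Omega S^m\times\Omega S^n)\ltimes\Sigma X$ and $\Omega Y\simeq\Omega S^m\times\Omega S^n\times\Omega(\text{something})$ — this is getting circular, so in the actual write-up I would instead invoke the dual: since $\Omega h$ has a right homotopy inverse, the Ganea/James splitting gives $\Omega Y'\simeq\Omega Y'$... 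Concretely, the honest derivation of (c) is: part~(a) gives $\Omega Y'\simeq \Omega D\times F$ where $F=$ homotopy fibre of $\Omega q$, and by part~(b) $F\simeq\Omega(\text{cofibre of the section})$, and chasing $\Omega D\simeq\Omega S^m\times\Omega S^n$ together with the identification of the fibre of $q$ from the fibration in (b) yields $\Omega Y'\simeq\Omega S^m\times\Omega S^n\times\Omega((\Omega S^m\times\Omega S^n)\ltimes\Sigma X)$. I expect the bookkeeping in this last step — tracking which half-smash appears and why the fibre of $q$ loops to $\Omega((\Omega S^m\times\Omega S^n)\ltimes\Sigma X)$ — to be where the care is needed, but it is forced once parts~(a) and~(b) are in hand, via the standard fact that a homotopy fibration $\nameddright{C}{}{B}{}{D}$ with $\Omega$ of the second map split gives $\Omega B\simeq\Omega D\times\Omega C$.
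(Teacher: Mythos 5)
The proposal has the right ingredients---establish part (a), feed a splitting into Proposition~\ref{specialGTcofib}, then loop---but it misidentifies the cofibration to which the proposition applies, and this confusion never gets resolved. You set out to apply Proposition~\ref{specialGTcofib} to $S^{m+n-1}\to Y\to Y'$ while simultaneously declaring ``$A=\Sigma X$,'' but that is not a coherent instance of the proposition: in the cofibration $A\to Y\to Y\cup CA$ with $Y\cup CA=Y'$ one is forced to take $A=S^{m+n-1}$, and the fibre would then come out as $\Omega Y'\ltimes S^{m+n-1}$, not the $\Omega Y'\ltimes\Sigma X$ you write. The paper instead applies Proposition~\ref{specialGTcofib} to the cofibration $\Sigma X\stackrel{f}{\to}Y'\stackrel{q}{\to}D$: here $Y'$ plays the role of ``$Y$'', $\Sigma X$ plays the role of ``$A$'' (a genuine suspension, so the Whitehead-product clause applies with $\overline A=X$), $D$ plays the role of ``$Y\cup CA$'', and the map ``$h$'' is $q$. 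The hypothesis of the proposition then asks precisely for a right homotopy inverse of $\Omega q$, which is exactly what part (a) provides, and the fibre $E$ of $q$ comes out as $\Omega D\ltimes\Sigma X\simeq(\Omega S^{m}\times\Omega S^{n})\ltimes\Sigma X$, giving (b) after the substitution $\Omega D\simeq\Omega S^m\times\Omega S^n$. Once this is set up correctly, the worry you raised about ``making precise that the Hilton--Milnor splitting $t$ descends through $\Omega h$'' disappears: the map that needs splitting is $\Omega q$, not the loop of the inclusion $Y\to Y'$ (a splitting of the latter is the content of Theorem~\ref{PDthm2}, not of this theorem), and $s$ as defined in the statement is already a candidate once you set $r$ to be the composite $S^m\vee S^n\hookrightarrow Y'\stackrel{q}{\to}D$ and note $\Omega r\circ t=\Omega q\circ s$.

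For part (c), once the cofibration is correctly identified, the argument is one line: part (b) is a homotopy fibration over $D$ in which $\Omega q$ is split, so looping gives $\Omega Y'\simeq\Omega D\times\Omega\bigl((\Omega S^m\times\Omega S^n)\ltimes\Sigma X\bigr)$, and part (a) converts $\Omega D$ into $\Omega S^m\times\Omega S^n$. Your paragraph on (c) eventually lands in the vicinity of this after several abandoned attempts, but the meandering is itself a symptom of not having pinned down which fibration is being looped. Finally, for part (a) the paper simply cites \cite[Lemmas 2.2 and 2.3]{BT}; your homology sketch is in the right spirit, but to replace the citation one must actually compute $H_*(\Omega D)$ from the cohomology ring of $D$ (this is where the ring hypothesis $H^*(D)\cong H^*(S^m\times S^n)$ is used) and verify the map is an isomorphism---you flag this as a potential obstacle and it is indeed the one nontrivial input, but it is also exactly the step the paper delegates to \cite{BT}.
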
         

\begin{proof} 
For part~(a), let $r$ be the composite 
\(r\colon S^{m}\vee S^{n}\hookrightarrow\namedright{Y'}{q}{D}\).  
The definitions of~$r$ and $s$ imply that $\Omega r\circ t=\Omega q\circ s$. 
So to prove part~(a) it is equivalent to show that $\Omega r\circ t$ is a homotopy 
equivalence. This is proved by directly copying the argument in~\cite[Lemmas 2.2 and 2.3]{BT}. 

For part~(b), consider the homotopy cofibration 
\(\nameddright{\Sigma X}{f}{Y'}{q}{D}\). 
Define the space $E$ and the map $p$ by the homotopy fibration 
\(\nameddright{E}{p}{Y'}{q}{D}\).  
Part~(a) implies the map $\Omega q$ has a right homotopy inverse 
\(s'\colon\namedright{\Omega D}{}{\Omega Y'}\). 
Therefore Proposition~\ref{specialGTcofib} implies that there is a homotopy fibration 
\[\llnameddright{(\Sigma\Omega D\wedge X)\vee\Sigma X} 
             {[\gamma',f]+f}{Y'}{q}{D}\] 
where $\gamma'=ev\circ\Sigma s'$. Substituting in the homotopy equivalence 
for $\Omega D$ in part~(a), which also lets us substutite $\gamma=ev\circ s$ 
for $\gamma'=ev\circ s'$, we obtain a homotopy fibration 
\[\llnameddright{(\Sigma(\Omega S^{m}\times\Omega S^{n})\wedge X)\vee\Sigma X} 
         {[\gamma,f]+f}{Y'}{q}{D},\] 
proving part~(b). 

Finally, for part~(c), since $\Omega q$ has a right homotopy inverse by part~(a), 
the homotopy fibration in part~(b) splits after looping, giving a homotopy equivalence 
\[\Omega Y'\simeq\Omega D\times 
            \Omega((\Omega S^{m}\times\Omega S^{n})\ltimes\Sigma X).\] 
Substituting in the homotopy equivalence 
$\Omega D\simeq\Omega S^{m}\times\Omega S^{n}$ 
from part~(a) then completes the proof. 
\end{proof} 

\begin{example} 
\label{decompex1} 
For $n\geq 2$, let $M$ be an $(n-1)$-connected $2n$-dimensional Poincar\'{e} Duality 
complex. By Poincar\'{e} Duality, 
\[H^{m}(M)\cong\left\{\begin{array}{ll} 
        \mathbb{Z} & \mbox{if $m=0$ or $m=2n$} \\ 
        \mathbb{Z}^{d} & \mbox{if $m=n$} \\ 
        0 & \mbox{otherwise}\end{array}\right.\] 
for some integer $d\geq 0$. Assume that $d\geq 2$ and $n\notin\{2,4,8\}$.  
By~\cite[Lemma 3.3]{BT} generators $x_{1},\ldots,x_{d}$ of $H^{n}(M)$ can 
be chosen such that  $x_{1}\cup x_{2}$ generates $H^{2n}(M)$ 
for some $x_{1}\neq x_{2}$. (Note that if $n\in\{2,4,8\}$ then the existence of 
an element of Hopf invariant one allows for the possibility that only $x_{1}\cup x_{1}$ 
generates $H^{2n}(M)$.) 
Now give $M$ a $CW$-structure by corresponding one $n$-cell to each $x_{k}$ 
and attaching the top cell. Then there is a homotopy cofibration 
\[\nameddright{S^{2n-1}}{g}{\bigvee_{i=1}^{d} S^{n}}{j}{M}\] 
where $j^{\ast}$ sends $x_{k}$ to the generator of the $k^{th}$ sphere in the wedge, 
and $g$ attaches the top cell of $M$. Let $\Sigma X=\bigvee_{i=3}^{d} S^{n}$ and let $f$ 
be the composite 
\(f\colon\Sigma X\hookrightarrow\namedright{\bigvee_{i=1}^{d} S^{n}}{j}{M}\). 
Define the space $D$ and the map $q$ by the homotopy cofibration 
\[\nameddright{\Sigma X}{f}{M}{q}{D}.\] 
Since $x_{1}\cup x_{2}$ generates $H^{2n}(M)$ and $x_{1},x_{2}$ correspond 
to $\bigvee_{i=1}^{2} S^{n}$, the space $D$ satisfies $\cohlgy{D}\cong\cohlgy{S^{n}\times S^{n}}$. 
Therefore, Theorem~\ref{PDthm} applies, and we obtain a homotopy fibration 
\[\llnameddright{(\Sigma(\Omega S^{n}\times\Omega S^{n})\wedge X)\vee\Sigma X} 
           {[\gamma,f]+f}{M}{q}{D}\]  
where $\gamma=ev\circ\Sigma s$ and a homotopy equivalence 
\begin{equation} 
  \label{loopM1} 
  \Omega M\simeq\Omega S^{n}\times\Omega S^{n}\times 
           \Omega((\Omega S^{n}\times\Omega S^{n})\ltimes\Sigma X). 
\end{equation} 
\end{example} 

Example~\ref{decompex1} improves on some of the relevant results in~\cite{BB,BT}. Using different 
methods, in~\cite{BT} the same decomposition for $\Omega M$ was obtained if $n\notin\{2,4,8\}$ 
and using yet another set of methods, in~\cite{BB} the same decomposition 
for $\Omega M$ was obtained for all $n$. But in neither case was the map from the 
fibre of $q$ into $M$ identified. 

\begin{remark} 
\label{sphereremark} 
In general, there are homotopy equivalences 
$B\ltimes\Sigma A\simeq (\Sigma B\wedge A)\vee\Sigma A$ and 
$\Sigma (B\times A)\simeq\Sigma B\vee\Sigma A\vee(\Sigma A\wedge B)$, and 
a property of the James construction is that $\Sigma\Omega S^{n}$ is homotopy 
equivalent to a wedge of spheres. Combining these facts shows that 
$(\Omega S^{n}\times\Omega S^{n})\ltimes\Sigma X$ is homotopy equivalent to 
a wedge of spheres, and if desired, a precise enumeration of these spheres can be 
made. The Hilton-Milnor Theorem then implies that 
$\Omega((\Omega S^{n}\times\Omega S^{n})\ltimes\Sigma X)$  
is homotopy equivalent to an infinite product of spheres. Hence the decomposition~(\ref{loopM1}) 
implies that the homotopy groups of $\Omega M$ can be determined to exactly the same 
extent as can the homotopy groups of spheres. 
\end{remark} 

\begin{example} 
\label{decompex2} 
For $n\geq 2$, let $M$ be an $(n-1)$-connected $(2n+1)$-dimensional Poincar\'{e} Duality 
complex. By Poincar\'{e} Duality, 
\[H^{m}(M)\cong\left\{\begin{array}{ll} 
        \mathbb{Z} & \mbox{if $m=0$ or $m=2n+1$} \\ 
        \mathbb{Z}^{d} & \mbox{if $m=n$} \\ 
        \mathbb{Z}^{d}\oplus G & \mbox{if $m=n+1$} \\ 
        0 & \mbox{otherwise}\end{array}\right.\] 
for some integer $d\geq 0$ and some finite abelian group $G$. Assume 
that $d\geq 1$. Rationally, $M$ still satisfies Poincar\'{e} Duality, so we can choose 
generators $x_{1},\ldots,x_{d}$ of the subgroup $\mathbb{Z}^{d}$ in $H^{n}(M)$ 
and $y_{1},\ldots,y_{d}$ of the subgroup $\mathbb{Z}^{d}$ 
in $H^{n+1}(M)$ such that  $x_{1}\cup y_{1}$ generates 
$H^{2n+1}(M)$. Give $\overline{M}$ a $CW$-structure by associating 
an $S^{n}$ to each $x_{k}$, an $S^{n+1}$ to each $y_{k}$, and an  
$(n+1)$-dimensional Moore space $P^{n+1}(t_{j})$ to each cyclic direct 
summand $\mathbb{Z}/t_{j}\mathbb{Z}$ of $G$. Write 
\begin{equation} 
  \label{2n+1Mbar} 
  \overline{M}\simeq S^{n}\vee S^{n+1}\vee\Sigma X 
\end{equation}  
where the $S^{n}$ corresponds to $x_{1}$, the $S^{n+1}$ corresponds to $y_{1}$, and  
$\Sigma X=(\bigvee_{i=2}^{d} S^{n}\vee S^{n+1})\vee(\bigvee_{j=1}^{s} P^{n+1}(t_{j}))$. 
Give $M$ a $CW$-structure by attaching the top cell to $\overline{M}$. Then  
there is a homotopy cofibration 
\[\nameddright{S^{2n}}{g}{S^{n}\vee S^{n+1}\vee\Sigma X}{j}{M}.\] 
Let $f$ be the composite 
\(f\colon\Sigma X\hookrightarrow\namedright{S^{n}\vee S^{n+1}\vee\Sigma X}{j}{M}\). 
Define the space $D$ and the map $q$ by the homotopy cofibration 
\[\nameddright{\Sigma X}{f}{M}{q}{D}.\] 
Since $x_{1}\cup y_{1}$ generates $H^{2n+1}(M)$ and $x_{1},y_{1}$ correspond 
to $S^{n}\vee S^{n+1}$ in~(\ref{2n+1Mbar}), the space $D$ satisfies 
$\cohlgy{D}\cong\cohlgy{S^{n}\times S^{n+1}}$. Therefore, Theorem~\ref{PDthm} 
applies, and we obtain a homotopy fibration 
\[\llnameddright{(\Sigma (\Omega S^{n}\times\Omega S^{n+1})\wedge X)\vee\Sigma X} 
           {[\gamma,f]+f}{M}{q}{D}\]  
where $\gamma=ev\circ\Sigma s$ and a homotopy equivalence 
\begin{equation} 
  \label{loopM2} 
  \Omega M\simeq\Omega S^{n}\times\Omega S^{n+1}\times 
           \Omega((\Omega S^{n}\times\Omega S^{n+1})\ltimes\Sigma X). 
\end{equation}  
\end{example}  

Example~\ref{decompex2} improves on the result in~\cite{B}. That paper used different 
methods to obtain the same homotopy decomposition, but did not describe the map 
from the fibre of $q$ into $M$. 

\begin{remark} 
\label{Mooreremark} 
As in Remark~\ref{sphereremark}, the fact that $\Sigma X$ is homotopy equivalent 
to a wedge of spheres and Moore spaces implies that 
$(\Omega S^{n}\times\Omega S^{n+1})\ltimes\Sigma X$ is homotopy equivalent 
to a wedge of spheres and Moore spaces. The Hilton-Milnor Theorem can then 
be applied to $\Omega((\Omega S^{n}\times\Omega S^{n+1})\ltimes\Sigma X)$ 
to decompose further. In particular, the smash product of two mod-$p^{r}$ Moore 
spaces is homotopy equivalent to a wedge of two mod-$p^{r}$ Moore spaces 
for $p$ a prime and $r\neq 2$, so if the $2$-torsion in $H^{n}(M)$ is controlled 
in this way then the output of the Hilton-Milnor Theorem is a product of looped 
spheres and looped Moore spaces. Therefore the decomposition~(\ref{loopM2}) 
implies that the homotopy groups of $M$ can be calculated to the same extent 
as can the homotopy groups of spheres and Moore spaces. 
\end{remark} 

\begin{remark} 
\label{1conn4mnfldremark} 
The $n=2$ case of simply-connected $4$-dimensional Poincar\'{e} Duality complexes 
can be recovered. In this case, we rely on the arugment in~\cite[Section 4]{BT}; this is 
phrased in terms of simply-connected $4$-manifolds but works equally well for 
simply-connected Poincar\'{e} Duality complexes. If~$M$ is such a space then 
$\Omega M\simeq S^{1}\times\Omega Z$ where $Z$ is a simply-connected 
torsion-free $5$-dimensional Poincar'{e} Duality complex. If $H^{3}(Z)=0$ then 
$Z\simeq S^{5}$ and otherwise $Z$ is one of the cases considered in 
Example~\ref{decompex2}. Therefore, in all cases, we obtain a decomposition 
of $\Omega M$. 
\end{remark}

\section{The based loops on highly connected Poincar\'{e} Duality complexes II} 
\label{sec:highII} 

Theorem~\ref{PDthm} can be pushed further. Consider again the cofibration 
\[\nameddright{S^{m+n-1}}{g}{Y}{j}{Y'=Y\cup e^{m+n}}\] 
where $g$ attaches the $(m+n)$-cell and $j$ is the inclusion. 
In Theorem~\ref{PDthm2} we identify the homotopy fibre of $j$ and show 
that $\Omega j$ has a right homotopy inverse. 

\begin{theorem} 
   \label{PDthm2} 
   Assume that there is a cofibration 
   \(\nameddright{S^{m+n-1}}{g}{Y}{j}{Y'}\) 
   as in Theorem~\ref{PDthm}. Then the following hold: 
   \begin{itemize} 
      \item[(a)] the map $\Omega j$ has a right homotopy inverse 
                     \(t\colon\namedright{\Omega M}{}{\Omega Y}\); 
      \item[(b)] there is a homotopy fibration 
                     \[\llnameddright{(\Sigma\Omega Y'\wedge S^{m+n-2})\vee S^{m+n-1}}{[\gamma,g]+g} 
                          {Y}{j}{Y'}\] 
                     where $\gamma=ev\circ\Sigma t$. 
   \end{itemize} 
\end{theorem}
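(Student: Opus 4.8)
The plan is to obtain part~(b) for free from Proposition~\ref{specialGTcofib} once part~(a) is proved, so I will concentrate on constructing the right homotopy inverse $t$ of $\Omega j$ in part~(a). Since the cofibration \(\nameddright{S^{m+n-1}}{g}{Y}{j}{Y'}\) satisfies the hypotheses of Theorem~\ref{PDthm}, I would begin by importing its conclusions: with $\Sigma X$, $f$, $D$, $q$ chosen as there, there is a homotopy fibration \(\nameddright{E}{p}{Y'}{q}{D}\) in which $E\simeq(\Sigma X\wedge(\Omega S^{m}\times\Omega S^{n}))\vee\Sigma X$ and $p\simeq[\gamma,f]+f$ with $\gamma=ev\circ\Sigma s$, the map $\Omega q$ has a right homotopy inverse, and (splitting the looped fibration) there is a homotopy equivalence $\Omega Y'\simeq\Omega D\times\Omega E$ realised by $\mu\circ(s'\times\Omega p)$, where $s'\colon\namedright{\Omega D}{}{\Omega Y'}$ is the section of $\Omega q$ and $\mu$ is the loop multiplication.

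The crux is that \emph{both} $s'$ and $p$ factor through $j$. The map $s$ is $\Omega(S^{m}\vee S^{n}\hookrightarrow Y')$ precomposed with the Hilton--Milnor section, and $S^{m}\vee S^{n}\hookrightarrow Y'$ factors as $S^{m}\vee S^{n}\hookrightarrow Y\stackrel{j}{\hookrightarrow}Y'$; hence $s\simeq\Omega j\circ\hat s$ for an explicit $\hat s\colon\namedright{\Omega S^{m}\times\Omega S^{n}}{}{\Omega Y}$, and correspondingly $s'\simeq\Omega j\circ\hat s'$ for an explicit $\hat s'\colon\namedright{\Omega D}{}{\Omega Y}$. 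The naturality of the evaluation $\namedright{\Sigma\Omega(-)}{ev}{(-)}$ then gives $\gamma=ev\circ\Sigma s\simeq j\circ(ev\circ\Sigma\hat s)=:j\circ\hat\gamma$, while $f$ factors through $Y$ by its very definition, say $f=j\circ\hat f$. The naturality of the Whitehead product yields $[\gamma,f]\simeq j\circ[\hat\gamma,\hat f]$, so $p\simeq j\circ\hat p$ with $\hat p=[\hat\gamma,\hat f]+\hat f\colon\namedright{E}{}{Y}$ (the ``$+$'' being the wedge-sum map, which manifestly commutes with postcomposition by $j$).

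With these lifts in hand I would set $t$ equal to the composite \(\nameddright{\Omega Y'}{\simeq}{\Omega D\times\Omega E}{\mu\circ(\hat s'\times\Omega\hat p)}{\Omega Y}\), using the equivalence above. Because $\Omega j$ is an $H$-map up to homotopy, $\Omega j\circ t\simeq\mu\circ\big((\Omega j\circ\hat s')\times(\Omega j\circ\Omega\hat p)\big)\circ(\text{equivalence})^{-1}\simeq\mu\circ(s'\times\Omega p)\circ(\text{equivalence})^{-1}\simeq\mathrm{id}_{\Omega Y'}$, which proves~(a). For~(b), I would apply Proposition~\ref{specialGTcofib} to the cofibration \(\nameddright{S^{m+n-1}}{g}{Y}{j}{Y'}\), writing $A=S^{m+n-1}=\Sigma S^{m+n-2}$ so that $\overline A=S^{m+n-2}$ (connected since $m,n\geq 2$) and using part~(a) for the required right homotopy inverse of $\Omega j$; the proposition delivers a homotopy fibration with fibre $(\Sigma\Omega Y'\wedge S^{m+n-2})\vee S^{m+n-1}$ and map $[\gamma,g]+g$ with $\gamma=ev\circ\Sigma t$, and the identification $\Sigma\Omega Y'\wedge S^{m+n-2}\cong S^{m+n-1}\wedge\Omega Y'$ turns this into the statement of~(b).

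I expect the main obstacle to lie in the second paragraph: carefully tracing the definition $\gamma=ev\circ\Sigma s$ through the naturality of the evaluation and of Whitehead products to produce the lift $\hat p$ of $p$ through $j$, and then making sure that the two lifts $\hat s'$ and $\hat p$ are spliced together using exactly the product equivalence $\Omega Y'\simeq\Omega D\times\Omega E$ coming from the split looped fibration, so that the $H$-map computation closes up. Everything else is formal bookkeeping with material already established.
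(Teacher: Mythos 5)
Your proof is correct and is in essence the same proof as the paper's, but the route to the right homotopy inverse in part~(a) is packaged differently. Both arguments rest on the same observation: the splitting $\Omega Y'\simeq\Omega D\times\Omega E$ coming from Theorem~\ref{PDthm} is realised by maps (the section $s'$ of $\Omega q$ and the fibre map $p$) that lift through $j$, so the $H$-map property of $\Omega j$ hands you a right inverse. Where you differ is in how the lift $\hat p$ of $p$ through $j$ is produced. You read it off directly from the formula $p\simeq[\gamma,f]+f$: $f$ factors through $Y$ by construction, $\gamma=ev\circ\Sigma s$ factors because $s$ does (naturality of $ev$), and naturality of the Whitehead product transports the factorization through $[\,\cdot\,,\cdot\,]$. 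This is clean and short. The paper instead builds a parallel cofibration $S^{m+n-1}\vee\Sigma X\to Y\to D$ on $Y$ itself, applies Proposition~\ref{specialGTcofib} to it to get a second fibration over $D$ with total space $Y$, and extracts the lift from the map of fibrations induced by $j$ (using a right inverse $(i_2\wedge 1)\vee i_2$ of the induced map on fibres). The two lifts agree up to homotopy; the paper's version is somewhat more elaborate but makes the comparison between the fibre of $Y\to D$ and the fibre of $Y'\to D$ explicit, which is aesthetically in keeping with the rest of the section. Part~(b) is handled identically in both: once $\Omega j$ splits, apply Proposition~\ref{specialGTcofib} to $S^{m+n-1}\to Y\to Y'$ with $\overline A=S^{m+n-2}$ (path-connected since $m,n\geq 2$), and rewrite $\Sigma\Omega Y'\wedge S^{m+n-2}\simeq S^{m+n-1}\wedge\Omega Y'$. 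No gaps.
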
 

\begin{proof} 
Throughout the proof we write $Y$ as $S^{m}\vee S^{n}\vee\Sigma X$. 

For part~(a), Theorem~\ref{PDthm} gives a homotopy decomposition of $\Omega Y'$ 
via the map 
\(\namedright{Y'}{q}{D}\) 
but does not immediately relate this to $j$. To do so, let $\overline{q}$ be the composite 
\[\overline{q}\colon\nameddright{S^{m}\vee S^{n}\vee\Sigma X}{j}{Y'}{q}{D}.\] 

Recall that there is a homotopy cofibration 
\(\nameddright{\Sigma X}{f}{Y'}{q}{D}\). 
We claim that there is a homotopy cofibration 
\(\nameddright{S^{m+n-1}\vee\Sigma X}{g+i}{S^{m}\vee S^{n}\vee\Sigma X}{\overline{q}}{D}\) 
where $i$ is the inclusion of $\Sigma X$. To see this, consider the homotopy pushout diagram 
\begin{equation} 
  \label{barDdgrm} 
  \diagram 
      S^{m+n-1}\rdouble\dto^{i_{1}} & S^{m+n-1}\dto^{g} & \\ 
      S^{m+n-1}\vee\Sigma X\rto^-{g+i}\dto^{q_{2}} & S^{m}\vee S^{n}\vee\Sigma X\rto\dto^{j}  
           & \overline{D}\ddouble \\ 
      \Sigma X\rto & Y'\rto & \overline{D}.  
  \enddiagram 
\end{equation}  
Here, $i_{1}$ is the inclusion of the first wedge summand, $q_{2}$ is the pinch onto 
the second wedge summand, and $\overline{D}$ is the homotopy cofibre of $g+i$. 
The map 
\(\namedright{\Sigma X}{}{Y'}\) 
along the bottom row can be identified by restricting $j\circ(g+i)$ to $\Sigma X$: this 
is the definition of $f$. Hence $\overline{D}\simeq D$ and the homotopy cofibration 
along the bottom row of the diagram is 
\(\nameddright{\Sigma X}{f}{Y'}{q}{D}\). 
By definition, $\overline{q}=q\circ j$, so from the middle row of the diagram we 
obtain a homotopy cofibration 
\(\nameddright{S^{m+n-1}\vee\Sigma X}{g+i}{S^{m}\vee S^{n}\vee\Sigma X}{\overline{q}}{D}\). 

Next, recall from Theorem~\ref{PDthm} that the right homotopy inverse $s$ of $\Omega q$ 
is defined as the composite 
\(\nameddright{\Omega S^{m}\times\Omega S^{n}}{t}{\Omega(S^{m}\vee\Omega S^{n})} 
      {}{\Omega Y'}\).  
The latter map is the loops on the composite 
\(S^{m}\vee S^{n}\hookrightarrow\namedright{S^{m}\vee S^{n}\vee\Sigma X}{j}{Y'}\). 
Thus if $\overline{s}$ is the composite 
\[\overline{s}\colon\nameddright{\Omega S^{m}\times\Omega S^{n}}{t} 
      {\Omega(S^{m}\vee S^{n})}{}{\Omega(S^{m}\vee S^{n}\vee\Sigma X)}\] 
then $\overline{s}$ is a right homotopy inverse for $\Omega q\circ\Omega j=\Omega\overline{q}$. 
Therefore, applying Theorem~\ref{specialGTcofib} to the homotopy cofibration 
\(\nameddright{S^{m+n-1}\vee\Sigma X}{g+i}{S^{m}\vee S^{n}\vee\Sigma X}{\overline{q}}{D}\) 
we obtain a homotopy fibration 
\[\lllnameddright{(\Sigma(\Omega S^{m}\times\Omega S^{n})\wedge(S^{m+n-1}\vee\Sigma X)) 
             \vee (S^{m+n-1}\vee\Sigma X)} 
        {[\overline{\gamma},g+i]+(g+i)}{S^{m}\vee S^{n}\vee\Sigma X}{\overline{q}}{D}\] 
where $\overline{\gamma}=ev\circ\Sigma\overline{s}$. By Theorem~\ref{PDthm}, 
there is a homotopy fibration 
\[\llnameddright{(\Sigma (\Omega S^{m}\times\Omega S^{n})\wedge X)\vee\Sigma X} 
          {[\gamma,f]+f}{Y'}{q}{D}\]  
where $\gamma=ev\circ\Sigma s$. The two fibrations are compatible: (i) by definition,  
$\overline{q}=q\circ j$, (ii) by~(\ref{barDdgrm}), $j\circ(g+i)\simeq f\circ q_{2}$, 
(iii) note that $s=\Omega j\circ\overline{s}$ so $\gamma=j\circ\overline{\gamma}$, 
and (iv) the Whitehead product is natural. Thus there is a homotopy commutative diagram  
\[\diagram 
      (\Sigma(\Omega S^{m}\times\Omega S^{n})\wedge(S^{m+n-1}\vee\Sigma X))\vee (S^{m+n-1}\vee\Sigma X) 
                \rrto^-{[\overline{\gamma},g+i]+(g+i)}\dto^{(1\wedge q_{2})\vee q_{2}} 
            & & S^{m}\vee S^{n}\vee\Sigma X\rto^-{\overline{q}}\dto^{j} & D\ddouble \\ 
       (\Sigma (\Omega S^{m}\times\Omega S^{n})\wedge X)\vee\Sigma X 
               \rrto^-{[\gamma,f]+f} 
            & & Y'\rto^-{q} & D  
  \enddiagram\] 
where the two rows are homotopy fibrations. If 
\(i_{2}\colon\namedright{\Sigma X}{}{S^{m}\vee S^{n}\vee\Sigma X}\) 
is the inclusion, then $i_{2}$ is a right homotopy inverse for $q_{2}$. Therefore 
$(1\wedge i_{2})\vee i_{2}$ is a right homotopy inverse for $(1\wedge q_{2})\vee q_{2}$. 
Consequently, letting $A=(\Sigma (\Omega S^{m}\times\Omega S^{n})\wedge X)\vee\Sigma X$, 
$B=S^{m}\vee S^{n}\vee\Sigma X$, $\psi=(1\wedge i_{2})\vee i_{2}$ and $\mu$ be the loop 
multiplication, the composite 
\[\lnamedddright{\Omega A\times(\Omega S^{m}\times\Omega S^{n})}{\Omega\psi\times\overline{s}} 
      {\Omega B\times\Omega B}{\mu}{\Omega B}{\Omega j}{\Omega Y'}\] 
is a homotopy equivalence. Therefore $\Omega j$ has a right homotopy inverse. 

Part~(b) is now straightforward. The right homotopy inverse for $\Omega j$ implies 
that Theorem~\ref{PDthm} can be applied to the homotopy cofibration 
\(\nameddright{S^{m+n-1}}{g}{S^{m}\vee S^{n}\vee\Sigma X}{j}{Y'}\) 
to obtain a homotopy fibration 
\[\llnameddright{(\Sigma \Omega Y'\wedge S^{m+n-2})\vee S^{m+n-1}}{[\gamma,g]+g} 
          {S^{m}\vee S^{n}\vee\Sigma X}{j}{Y'}\] 
where $\gamma=ev\circ\Sigma t$. 
\end{proof} 

\begin{example} 
\label{decompex11} 
Let $n\geq 2$ but $n\notin\{2,4,8\}$. Let $M$ be an $(n-1)$-connected $2n$-dimensional 
Poincar\'{e} Duality complex with $H^{n}(M)\cong\mathbb{Z}^{d}$ for $d\geq 2$. As in 
Example~\ref{decompex1}, there is a homotopy cofibration 
\[\nameddright{S^{2n-1}}{g}{\bigvee_{i=1}^{d} S^{n}}{j}{M}\] 
where $g$ is the attaching map for the top cell. By Theorem~\ref{PDthm2}, 
$\Omega j$ has a right homotopy inverse 
\(t\colon\namedright{\Omega M}{}{\Omega(\bigvee_{i=1}^{d} S^{n})}\) 
and there is a homotopy fibration 
\[\llnameddright{(\Sigma\Omega M\wedge S^{2n-2})\vee S^{2n-1}}{[\gamma,g]+g} 
        {\bigvee_{i=1}^{d} S^{n}}{j}{M}\] 
where $\gamma=ev\circ\Sigma t$. 
\end{example} 

All cases in Example~\ref{decompex11} are new. Also new are all cases in the following. 

\begin{example} 
\label{decompex22} 
For $n\geq 2$, let $M$ be an $(n-1)$-connected $(2n+1)$-dimensional 
Poincar\'{e} Duality complex with $H^{n+1}(M)\cong\mathbb{Z}^{d}$ for 
$d\geq 1$. As in Example~\ref{decompex2}, there is a homotopy cofibration 
\[\nameddright{S^{2n}}{g}{S^{n}\vee S^{n+1}\vee\Sigma X}{j}{M}\] 
where $g$ is the attaching map for the top cell. By Theorem~\ref{PDthm2}, 
$\Omega j$ has a right homotopy inverse 
\(t\colon\namedright{\Omega M}{}{\Omega(S^{n}\vee S^{n+1}\vee\Sigma X)}\) 
and there is a homotopy fibration 
\[\llnameddright{(\Sigma\Omega M\wedge S^{2n-1})\vee S^{2n}}{[\gamma,g]+g} 
      {S^{n}\vee S^{n+1}\vee\Sigma X}{j}{M}\] 
where $\gamma=ev\circ\Sigma t$. 
\end{example} 

One useful application of Theorem~\ref{PDthm2} is to show that certain maps 
are null homotopic after looping. 

\begin{lemma} 
   \label{PDmap} 
   Let 
   \(\nameddright{S^{m+n-1}}{g}{Y}{j}{Y'}\) 
   be a cofibration as in Theorem~\ref{PDthm}. Suppose that there is a map 
   \(a\colon\namedright{Y'}{}{Z}\) 
   such that $a\circ j$ is null homotopic. Then $\Omega a$ is null homotopic. 
\end{lemma}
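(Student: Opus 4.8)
The plan is to use Theorem~\ref{PDthm2} to produce a right homotopy inverse for $\Omega j$ and then exploit the factorization of $\Omega j$ it provides. By Theorem~\ref{PDthm2}(a), the looped inclusion $\Omega j\colon\Omega Y\to\Omega Y'$ has a right homotopy inverse $t\colon\namedright{\Omega Y'}{}{\Omega Y}$, so $\Omega j\circ t\simeq 1_{\Omega Y'}$. Therefore $\Omega a\simeq\Omega a\circ\Omega j\circ t=\Omega(a\circ j)\circ t$. Since $a\circ j$ is null homotopic by hypothesis, so is $\Omega(a\circ j)$, and hence $\Omega a\simeq\Omega(a\circ j)\circ t$ is null homotopic.

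First I would record that the cofibration
\(\nameddright{S^{m+n-1}}{g}{Y}{j}{Y'}\)
is exactly the data to which Theorem~\ref{PDthm2} applies, so part~(a) of that theorem yields the right homotopy inverse $t$ with $\Omega j\circ t\simeq 1_{\Omega Y'}$. Second, I would write down the chain of homotopies
\[\Omega a\simeq\Omega a\circ(\Omega j\circ t)=(\Omega a\circ\Omega j)\circ t=\Omega(a\circ j)\circ t\]
using functoriality of $\Omega$. Third, since $a\circ j\simeq\ast$ by assumption, $\Omega(a\circ j)$ is null homotopic (the loops functor sends null homotopic maps to null homotopic maps), and composing a null homotopic map with $t$ on the right gives a null homotopic map. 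Hence $\Omega a\simeq\ast$.

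There is essentially no obstacle here: the entire content is packaged into Theorem~\ref{PDthm2}(a), and the remainder is a one-line diagram chase of the kind that says any map out of a space admitting a loop-retraction is detected on the retract. The only thing to be careful about is that the right homotopy inverse $t$ is a map of spaces (not of $H$-spaces), but that is all that is needed, since we only compose with it on the right and never multiply. So the proof is immediate once Theorem~\ref{PDthm2} is in hand.
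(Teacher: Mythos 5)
Your proof is correct and is essentially identical to the paper's: both invoke Theorem~\ref{PDthm2} to get the right homotopy inverse $t$ of $\Omega j$ and then observe that $\Omega a\simeq\Omega a\circ\Omega j\circ t=\Omega(a\circ j)\circ t\simeq\ast$. There is nothing to add.
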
 

\begin{proof} 
By Theorem~\ref{PDthm2}, $\Omega j$ has a right homotopy inverse 
\(t\colon\namedright{\Omega Y'}{}{\Omega Y}\). 
So $\Omega a\simeq\Omega a\circ\Omega j\circ t$ but $\Omega a\circ\Omega j$ 
is null homotopic, implying that $\Omega a$ is null homotopic. 
\end{proof} 

\begin{example} 
Let $M$ be an $(n-1)$-connected $2n$-dimensional Poincar\'{e} Duality complex 
and let~$G$ be a topological group group. Let 
\(\namedright{P}{}{M}\) 
be a principal $G$-bundle classified by a map of the form 
\(a\colon\nameddright{M}{\pi}{S^{2n}}{\epsilon}{BG}\) 
where $\pi$ is the pinch map to the top cell and $\epsilon$ represents a generator 
of $\pi_{2n-1}(G)$. Depending on $G$, there may be a finite or countably infinite 
number of such principal bundles which are inequivalent. Since $a$ factors through $\pi$, 
the composite $a\circ j$ is null homotopic. Therefore $\Omega a$ is null homotopic 
by Lemma~\ref{PDmap}. Consequently, there is a homotopy equivalence 
$\Omega P\simeq\Omega M\times\Omega G$, 
and this holds independently of the bundle type. 

Carrying on, observe that a map 
\(f\colon\namedright{M}{}{BG}\) 
classifying a principal $G$-bundle has a factorization in the form of $a$ if and only if 
the restriction of $f$ to the $n$-skeleton $\bigvee_{i=1}^{d} S^{n}$ of $M$ is null homotopic. 
This would occur, for example, if $\pi_{n}(BG)\cong\pi_{n-1}(G)$ equals zero. In particular, 
if $M$ is a $2$-connected $6$-manifold and $G$ is a simply-connected simple compact 
Lie group then $\pi_{2}(G)=0$ so every principal $G$-bundle $P$ over $M$ has the 
property that $\Omega P\simeq\Omega M\times\Omega G$. Specializing further, 
if $G=SU(n)$ then the pinch map $\pi$ induces an isomorphism 
$[M,BSU(n)]\cong [S^{6},BSU(n)]$ and if $n\geq 3$ then $\pi_{5}(SU(n))\cong\mathbb{Z}$, 
so there are countably many distinct principal $SU(n)$-bundles over $M$ but all become homotopy 
equivalent after looping. 
\end{example} 

\begin{example} 
This is a variation on the previous example. 
Let $M$ be an $(n-1)$-connected $2n$-dimensional Poincar\'{e} Duality complex and let  
\(a\colon\nameddright{M}{\pi}{S^{2n}}{\epsilon}{M}\) 
be a self-map where $\pi$ is the pinch map to the top cell and $\epsilon$ is any element of 
$\pi_{2n}(M)$. Since $a$ factors through $\pi$, the composite $a\circ j$ is null homotopic. 
Therefore $\Omega a$ is null homotopic by Lemma~\ref{PDmap}. In particular, $a$ induces 
the zero map in homotopy groups. This is despite the fact that $a$ itself need not be 
null homotopic. 
\end{example} 


\section{The collar map for a connected sum} 
\label{sec:collar} 

In this section we give an entirely new example of our methods to demonstrate 
their strength. Let $M$ and $N$ be simply-connected closed $n$-dimensional Poincar\'{e} 
Duality complexes. Let $\overline{M}$ and $\overline{N}$ be the $(n-1)$-skeletons 
of $M$ and $N$ respectively, and let  
\[\nameddright{S^{n-1}}{f_{1}}{\overline{M}}{j_{1}}{M}\] 
\[\nameddright{S^{n-1}}{f_{2}}{\overline{N}}{j_{2}}{N}\] 
be the homotopy cofibrations that attach the top cells to $M$ and $N$. The 
connected sum $M\conn N$ is given by the homotopy cofibration 
\[\llnameddright{S^{n-1}}{f_{1}+f_{2}}{\overline{M}\vee\overline{N}}{}{M\conn N}.\] 
Geometrically, $M\conn N$ is obtained by cutting an $n$-disc out of the interior 
of the top cell in each of $M$ and $N$ and then gluing the two together along the 
boundary of the removed $n$-disc. We can then collapse that collar (the boundary 
of the $n$-disc) to a point to obtain a cofibration 
\[\nameddright{S^{n-1}}{g}{M\conn N}{j}{M\vee N}.\] 

In~\cite[Theorem 5.1]{HaL}, Halperin and Lemaire showed that if $M$ is any simply-connected 
closed, compact Poincar\'{e} Duality complex then the attaching map for the top cell of $M$ 
is rationally inert, implying that the inclusion 
\(\namedright{\overline{M}}{j_{1}}{M}\) 
has the property that, rationally, $\Omega j_{1}$ has a right homotopy inverse. Halperin 
and Lemaire go on~\cite[Theorem 5.4]{HaL} to show that if at least one of 
$\cohlgy{M;\mathbb{Q}}$ or $\cohlgy{N;\mathbb{Q}}$ is not monogenic (generated by a 
single element) then the map 
\(\namedright{M\conn N}{j}{M\vee N}\) 
also has the property that, rationally, $\Omega j$ has a right homotopy inverse. 

We will prove an integral variation of this statement. Integrally, it may not be true 
that $\Omega j_{1}$ and~$\Omega j_{2}$ have right homotopy inverses. However, 
we show that if they do then $\Omega j$ does as well. 

\begin{proposition} 
   \label{collar} 
   Let $M$ and $N$ be simply-connected closed $n$-dimensional Poincar\'{e} Duality 
   complexes. If the inclusions 
   \(\namedright{\overline{M}}{j_{1}}{M}\) 
   and 
   \(\namedright{\overline{N}}{j_{2}}{N}\) 
   have right homotopy inverses after looping, then so does the map 
   \(\namedright{M\conn N}{j}{M\vee N}\). 
   In particular, there is a homotopy equivalence 
   \[\Omega(M\conn N)\simeq\Omega(M\vee N)\times\Omega(\Omega(M\vee N)\ltimes S^{n-1}).\] 
\end{proposition} 

\begin{proof} 
Observe that the composite 
\(\nameddright{\overline{M}\vee\overline{N}}{}{M\conn N}{j}{M\vee N}\) 
is $j_{1}\vee j_{2}$. We will show that $\Omega(j_{1}\vee j_{2})$ has a right 
homotopy inverse, implying that $\Omega j$ also has a right homotopy inverse. 
Granting this, the homotopy cofibration 
\(\nameddright{S^{n-1}}{g}{M\conn N}{j}{M\vee N}\) 
satisfies the hypotheses of Proposition~\ref{specialGTcofib}, and therefore we obtain the 
asserted homotopy equivalence for $\Omega(M\conn N)$.  

It remains to show that $\Omega(j_{1}\vee j_{2})$ has a right homotopy inverse. 
In general, the inclusion of the wedge of simply-connected spaces 
into their product induces a natural homotopy fibration sequence 
\(\nameddright{\Omega X\ast\Omega Y}{}{X\vee Y}{}{X\times Y}\) 
which splits after looping to give a natural homotopy equivalence 
$\Omega(X\vee Y)\simeq\Omega X\times\Omega Y\times\Omega(\Omega X\ast\Omega Y)$. 
In our case we obtain a homotopy fibration diagram 
\[\diagram 
       \Omega\overline{M}\ast\Omega\overline{N}\rto\dto^{\Omega j_{1}\ast\Omega j_{2}} 
             & \overline{M}\vee\overline{N}\rto\dto^{j_{1}\vee j_{2}} 
             & \overline{M}\times\overline{N}\dto^{j_{1}\times j_{2}} \\ 
       \Omega M\ast\Omega N\rto & M\vee N\rto & M\times N 
  \enddiagram\] 
and a homotopy commutative diagram 
\[\diagram 
     \Omega\overline{M}\times\Omega\overline{N}\times\Omega(\Omega\overline{M}\ast\Omega\overline{N}) 
           \rrto\dto^{\Omega j_{1}\times\Omega j_{2}\times\Omega(\Omega j_{1}\ast\Omega j_{2})} 
         & & \Omega(\overline{M}\vee\overline{N})\dto^{\Omega(j_{1}\vee j_{2})} \\ 
     \Omega M\times\Omega N\times\Omega(\Omega M\ast\Omega N)\rrto & & \Omega(M\vee N) 
  \enddiagram\] 
where the horizontal maps are homotopy equivalences. By hypothesis, $\Omega j_{1}$ 
and $\Omega j_{2}$ have right homotopy inverses, and therefore so does 
$\Omega j_{1}\times\Omega j_{2}\times\Omega(\Omega j_{1}\ast\Omega j_{2})$. 
Hence $\Omega(j_{1}\vee j_{2})$ also has a right homotopy inverse. 
\end{proof} 

For example, if $m\geq 2$ then $S^{m}\times S^{2n-m}$ is a Poincar\'{e} Duality 
complex of dimension $2n$, $\overline{S^{m}\times S^{2n-m}}\simeq S^{m}\vee S^{2n-m}$, 
and the inclusion 
\(\namedright{S^{m}\vee S^{2n-m}}{}{S^{m}\times S^{2n-m}}\) 
has a right homotopy inverse after looping. If $m\neq n$ then $S^{m}\times S^{2n-m}$ 
is not an $(n-1)$-connected $2n$-dimensional Poincar\'{e} Duality complex, so it is 
different from the spaces in Example~\ref{decompex1}. Given an $(n-1)$-connected 
$2n$-dimensional Poincar\'{e} Duality complex $M$, 
by Proposition~\ref{collar} there is a homotopy equivalence 
\[\Omega(M\conn(S^{m}\times S^{2n-m}))\simeq\Omega(M\vee(S^{m}\times S^{2n-m}))\times 
       \Omega(\Omega(M\vee(S^{m}\times S^{2n-m}))\ltimes S^{2n-1}).\] 
Similarly, one can consider the connected sum of $S^{m}\times S^{2n+1-m}$ 
and one of the $(n-1)$-connected $(2n+1)$-dimensional manifolds in 
Example~\ref{decompex2}.

\bibliographystyle{amsalpha}

\end{document}